\numberwithin{equation}{section}
\newtheorem{theorem}{Theorem}[section]
\newtheorem{corollary}[theorem]{Corollary}
\newtheorem{lemma}[theorem]{Lemma}
\newtheorem{proposition}[theorem]{Proposition}
\theoremstyle{remark}
\newtheorem{remark}[theorem]{Remark}
\newtheorem{example}[theorem]{Example}
\theoremstyle{definition}
\newcommand\bp{\begin{proof}}
\newcommand\ep{\end{proof}}
\newcommand\unb{{\mathbf 1}}
\newcommand\Aut{\operatorname{Aut}}
\newcommand\Mat{\operatorname{Mat}}
\newcommand\Per{\operatorname{Per}}
\newcommand\mueq{\mu_{\mathrm{eq}}}
\newcommand\nueq{\nu_{\mathrm{eq}}}
\newcommand\meae{\mu_{\mathrm{eq}}\text{-}\mathrm{a.e.{}}}
\newcommand{\C}{{\mathbb C}}
\newcommand{\N}{{\mathbb N}}
\newcommand{\R}{{\mathbb R}}
\newcommand\T{{\mathbb T}}
\newcommand\Z{{\mathbb Z}}
\newcommand{\Ff}{{\mathcal F}}
\newcommand\G{\mathcal G}
\newcommand\RR{\mathcal R}
\newcommand\QQ{\mathcal Q}
\def\Bb{\mathcal{B}}
\def\Ff{\mathcal{F}}
\def\Oo{\mathcal{O}}
\def\Pp{\mathcal{P}}
\def\Tt{\mathcal{T}}
\newcommand{\Hh}{\mathcal{H}}
\newcommand{\Kk}{\mathcal{K}}
\newcommand{\clsp}{\overline{\mathrm{span}}}
\newcommand{\Lmin}{\Lambda^{\min}}
\def\lms{\Lambda^0\!/_{\!\!\sim}}
\def\lmss{\Lambda^0\!/_{\!\!\approx}}
\begin{document}

\title{Von Neumann algebras of strongly connected higher-rank graphs}

\date{September 22, 2014; minor corrections on January 13, 2015}

\author[Laca]{Marcelo Laca}
\address[Marcelo Laca]{
Department of Mathematics and Statistics\\
University of Victoria\\
PO Box 3060, Victoria BC,
Canada V8W 3R4}%
\email{laca@math.uvic.ca}

\author[Larsen]{Nadia S. Larsen}
\address[Nadia Larsen]{Department of Mathematics\\
University of Oslo\\
PO BOX 1053 Blindern\\
N-0316 Oslo\\
Norway}%
\email{nadiasl@math.uio.no}

\author[Neshveyev]{Sergey Neshveyev}
\address[Sergey Neshveyev]{Department of Mathematics\\
University of Oslo\\
PO BOX 1053 Blindern\\
N-0316 Oslo\\
Norway}%
\email{sergeyn@math.uio.no}

\author[Sims]{Aidan Sims}
\address[Aidan Sims]{
School of Mathematics and Applied Statistics\\
University of Wollongong\\
NSW 2522\\
Australia}%
\email{asims@uow.edu.au}

\author[Webster]{Samuel B.G. Webster}
\address[Sam Webster]{
School of Mathematics and Applied Statistics\\
University of Wollongong\\
NSW 2522\\
Australia}%
\email{sbgwebster@gmail.com}

\subjclass[2010]{46L10 (primary); 46L05, (secondary).} \keywords{von Neumann algebra; KMS
state; higher-rank graph; Borel equivalence relation}
\thanks{This research was supported by the Australian Research Council and the Natural Sciences
and Engineering Research Council of Canada. Parts of the work were completed at the
workshop \emph{Operator algebras and dynamical systems from number theory} (13w5152) at
the Banff International Research Station in November 2013 and at the conference
\emph{Classification, Structure, Amenability and Regularity} at the University of Glasgow
in September 2014, supported by the EPSRC, GMJT and LMS}

\begin{abstract}
We investigate the factor types of the extremal KMS states for the preferred dynamics on
the Toeplitz algebra and the Cuntz--Krieger algebra of a strongly connected finite
$k$-graph. For inverse temperatures above~1, all of the extremal KMS states are of
type~$\mathrm{I}_\infty$. At inverse temperature~1, there is a dichotomy: if the
$k$-graph is a simple $k$-dimensional cycle, we obtain a finite type~$\mathrm{I}$ factor;
otherwise we obtain a type~III factor, whose Connes invariant we compute in terms of the
spectral radii of the coordinate matrices and the degrees of cycles in the graph.
\end{abstract}

\maketitle

\section{Introduction}\label{sec:intro}

The $C^*$-algebras of strongly connected finite higher-rank graphs provide interesting
higher-rank analogues of Cuntz--Krieger algebras. In this paper we study von Neumann
algebras generated by these $C^*$-algebras in the representations defined by the extremal
KMS states for the preferred dynamics studied in \cite{AHLRS2}. Results of Enomoto, Fujii
and Watatani~\cite{EFW} show that when $k = 1$ and the graph is not a simple cycle, there
is a unique KMS state and the associated factor is of type~$\mathrm{III}_{\rho(A)^{-p}}$,
where $\rho(A)$ is the spectral radius of the adjacency matrix $A$ of the graph, and $p$
is the period of the graph in the sense of Perron-Frobenius theory: the greatest common
divisor of the lengths of cycles in the graph.

In the higher-rank case there can be more than one KMS state, and a complete
classification of such states has been recently obtained in~\cite{AHLRS2}. Specifically,
Theorem~7.1 of \cite{AHLRS2} shows that the extremal KMS states of the $C^*$-algebra
$C^*(\Lambda)$ of a finite strongly connected $k$-graph $\Lambda$ are indexed by the
characters of an associated subgroup $\Per\Lambda$ of $\Z^k$, whose group $C^*$-algebra
embeds as a central subalgebra of $C^*(\Lambda)$. The goal of the present paper is to
determine the types of these KMS states. Using Feldman--Moore theory~\cite{FM2} and the
groupoid description of a $k$-graph algebra, we obtain a very satisfactory generalisation
of Enomoto, Fujii and Watatani's result. Namely, suppose that~$\Lambda$ is not a simple
$k$-dimensional cycle. We define $\Pp_\Lambda$ to be the subgroup of $\Z^k$ generated by
the degrees of cycles in $\Lambda$. The $k$ coordinate graphs of $\Lambda$ determine
integer matrices $A_i$. The vector $\rho(\Lambda) = (\rho(A_1), \dots, \rho(A_k))$ of
spectral radii of these matrices determines a homomorphism $n \mapsto \rho(\Lambda)^n$ of
$\Pp_\Lambda$ into the multiplicative group of positive reals. We prove that the closure
of its image is the Connes spectrum of the type~III factor obtained from any of the
extremal KMS states of $C^*(\Lambda)$ described in \cite{AHLRS2}. We also determine the
types of the factors arising from KMS states on the Toeplitz algebra that do not factor
through $C^*(\Lambda)$, and from KMS states of $C^*(\Lambda)$ when $\Lambda$ is a simple
$k$-dimensional cycle. An interesting corollary is that the factors obtained from a
$k$-graph~$\Lambda$ depend only on its skeleton, and are independent of the factorisation
property.

In the case when $\Lambda$ is primitive and aperiodic --- or equivalently, when
$\Pp_\Lambda=\Z^k$ and $\Per\Lambda=0$ --- the unique KMS state $\varphi$ is the most
natural state on $C^*(\Lambda)$: it is the unique gauge-invariant state whose restriction
to the AF core of $C^*(\Lambda)$ is tracial. By our result, the Connes spectrum of the
associated factor is then the closure of the multiplicative group generated by the
spectral radii~$\rho(A_i)$ of the connectivity matrices $A_i$. In some special cases this
has been already established by Yang~\cite{Y0,Y}. She studied $C^*$-algebras and von
Neumann algebras of aperiodic $k$-graphs with a single vertex --- the higher-rank
analogues of Cuntz algebras. Under the technical condition that the so-called ``intrinsic
group" of the graph has rank at most 1, she proved that $\varphi$ is a factor state of
type~III with Connes spectrum equal to the closure of the multiplicative group generated
by the numbers $m_1, \dots, m_k$ of edges of each of the $k$ minimal degrees. This
generalises Olesen and Pedersen's result~\cite{OP} that the unique KMS state for the
gauge-action on the Cuntz algebra $\Oo_n$ is a type~$\mathrm{III}_{1/n}$ factor state.
Yang's result completely resolved the situation for aperiodic single-vertex $2$-graphs.
She then asked whether the result remains true for all single-vertex $k$-graphs,
regardless of the intrinsic group. A special case of our main theorem implies that this
is indeed the case, under the sole assumption of aperiodicity.
\newline

The paper is organised as follows. We introduce necessary background about $k$-graphs and
their $C^*$-algebras in Section~\ref{sprelim}. In Section~\ref{sec:mainthm}, we state our
main result, Theorem~\ref{thm:main}, and begin the proof by analysing the factors arising
from KMS states at large inverse temperatures. These are all type~$\mathrm{I}_\infty$
states and the associated von Neumann factors each have a canonical presentation as
$\Bb(\ell^2(\Lambda v))$ for some $v \in \Lambda^0$.

In Section~\ref{sec:S-general} we present a computation of the Connes invariant
$S(W^*(\QQ))$ of the von Neumann algebra of an ergodic countable equivalence relation
$\QQ$ with a quasi-invariant measure~$\mu$. Corollary~\ref{ceqConnes} says that if the
sub-relation $\QQ^D$ defined by the kernel of the Radon--Nikodym cocycle of $\mu$ is
ergodic, then $S(W^*(\QQ))$ is precisely the essential range of the Radon--Nikodym
cocycle. These results are surely known, but we give a self-contained treatment in lieu
of an explicit reference.

In Section~\ref{sequiv} we apply groupoid methods to study the factors associated to the
extremal KMS states of~$C^*(\Lambda)$. The groupoid model $\G$ for $C^*(\Lambda)$
\cite{KP} determines a Borel equivalence relation $\RR$ on the path
space~$\Lambda^\infty$. The unique probability measure $\mueq$ on $\Lambda^\infty$
induced by all KMS states of~$C^*(\Lambda)$ (see \cite[Proposition~8.1]{AHLRS2}) is
quasi-invariant with respect to $\RR$. The key result, Proposition~\ref{pmainp}, says
that $W^*(\RR)$ is isomorphic to the factor determined by any extremal KMS state of
$C^*(\Lambda)$; this isomorphism is noncanonical unless $\Lambda$ is aperiodic. We finish
the section by proving that the sub-relation $\RR^D$ obtained from $\RR$ as in the
preceding paragraph contains a still smaller relation~$\RR^\gamma$ which is an \'etale
topological equivalence relation whose $C^*$-algebra is the AF core of~$C^*(\Lambda)$.

In Section~\ref{sec:PF}, we develop a Frobenius analysis of strongly connected
higher-rank graphs. We investigate the group $\Pp_\Lambda \subseteq \Z^k$ generated by
the degrees $d(\lambda)$ of cycles in $\Lambda$. We show that there is a map $C :
\Lambda^0 \times \Lambda^0 \to \Z^k/\Pp_\Lambda$ such that $C(r(\lambda), s(\lambda)) =
d(\lambda) + \Pp_\Lambda$ for all $\lambda$. The key result of the section,
Proposition~\ref{properiod}, says that there is a strictly positive $p \in \Pp_\Lambda$
with the following property: $C(v,w) = 0$ if and only if there is a path of degree $p$
connecting $v$ to $w$. We also show that the relation $\sim$ given by $v \sim w$ if and
only if $C(v,w) = 0$ is an equivalence relation on $\Lambda^0$. We deduce that there
exists a natural free and transitive action of $\Z^k/\Pp_\Lambda$ on $\lms$, and that the
decomposition of $\R^{\Lambda^0}$ into direct summands indexed by $\lms$ is a system of
imprimitivity for the connectivity matrices $A_i$. We also deduce that~$\Pp_\Lambda$
always contains the periodicity group $\Per\Lambda$ of \cite{CKSS, AHLRS2}, with equality
if and only if $\Lambda$ is a simple $k$-dimensional cycle (see
Proposition~\ref{prp:P=Per}).

Finally, in Section~\ref{sec:typeclass}, we prove our main theorem. We show that the AF
core of $C^*(\Lambda)$ decomposes as a direct sum with summands indexed by $\lms$.
Corollary~\ref{cergodic} shows that the ergodic components of $\RR^\gamma$ are the sets
$X_\omega$ of infinite paths with range in $\omega \in \lms$. Each characteristic
function $\unb_{X_\omega}$ is a full projection in $W^*(\RR)$, so the type of $W^*(\RR)$
coincides with that of $W^*(\RR|_{X_\omega})$; we compute the latter using the results of
Section~\ref{sec:S-general}. We briefly discuss the relationship between our results and
Yang's, and show that the factorisation property in $\Lambda$ does not affect the factors
that arise from it. We conclude by applying our main theorem to a few illustrative
examples.

\section{Higher-rank graphs} \label{sprelim}

We denote by $\N$ the monoid $\{0,1,2,\dots\}$ of nonnegative integers under addition.
For an integer $k \ge 1$, we then regard $\N^k$ as a monoid with pointwise addition. The
canonical generators of $\N^k$ are denoted $e_i$, and for $n \in \N^k$ we write $n_i$ for
its $i$\textsuperscript{th} coordinate. We give $\N^k$ its natural partial order $m \le
n$ if and only if each $m_i \le n_i$ and $m \vee n$ denotes the coordinatewise maximum of
$m,n \in \N^k$.

A rank-$k$ graph, or a $k$-graph, is a small category $\Lambda$ together with an
assignment of a \emph{degree} $d(\lambda)\in\N^k$ to every morphism $\lambda\in\Lambda$
such that
\begin{enumerate}
\item $d(\lambda\mu)=d(\lambda)+d(\mu)$; and
\item whenever $d(\lambda)=m+n$, there is a unique factorisation $\lambda=\mu\nu$
    such that $d(\mu)=m$ and $d(\nu)=n$.
\end{enumerate}
Condition~(2) is often called the ``factorisation property.'' It implies in particular
that the only morphisms of degree $0$ are the identity morphisms.

The set of morphisms of degree $n\in\N^k$ is denoted by $\Lambda^n$. Its elements are
called paths of degree~$n$ in $\Lambda$. So $\Lambda^0$ is the set of identity morphisms;
we regard them interchangeably as paths of degree zero and as vertices. We also identify
$\Lambda^0$ with the set of objects of $\Lambda$ in the natural way, so that the codomain
and domain maps become functions $r, s : \Lambda \to \Lambda^0$. Throughout the paper we
consider only finite $k$-graphs, meaning that each $|\Lambda^n| < \infty$.

For $\mu \in \Lambda$ and $n \in \N^k$, denote by $\mu\Lambda^n$ the set of morphisms
$\mu\lambda$ such that $d(\lambda)=n$ and $s(\mu)=r(\lambda)$. The sets $\Lambda^n\nu$
and $\mu\Lambda^n\nu$ are defined similarly.

The \emph{connectivity matrices} $A_1, \dots, A_k\in\Mat_{\Lambda^0}(\N)$ of $\Lambda$
are given by
\[
A_i(v,w) = |v\Lambda^{e_i}w|.
\]
The factorisation property implies that the matrices $A_i$ pairwise commute. For $n \in
\N^k$, we define
\begin{equation}\label{pairwisecomm}\textstyle
A^n := \prod^k_{i=1} A_i^{n_i}.
\end{equation}
We then have $A^n(v,w)=|v\Lambda^nw|$ for all $v,w$, and $n \mapsto A^n$ is a semigroup
homomorphism. We write~$\rho(B)$ for the spectral radius of a square matrix $B$. Define
\[
\rho(\Lambda) := (\rho(A_1), \rho(A_2), \dots, \rho(A_k)) \in [0,\infty)^k.
\]
For $g \in \Z^k$ we write $\rho(\Lambda)^g$ for the product $\prod^k_{i=1}
\rho(A_i)^{g_i}$.

A finite $k$-graph $\Lambda$ is \emph{strongly connected} if $v\Lambda w \not= \emptyset$
for all $v,w \in \Lambda^0$. If there exists $p$ such that $v\Lambda^p w \not=\emptyset$
for all $v$ and $w$, then $\Lambda$ is called \emph{primitive}.

When working with strongly connected $k$-graphs, there is no loss of generality in
assuming that~$\Lambda^n$ is nonempty for every $n \in \N^k$, and it is then not
difficult to check (see \cite[Lemma~2.1]{AHLRS2} and the paragraph before it) that
\begin{equation}\label{eq:nosinks}
v\Lambda^n \not= \emptyset\text{ and } \Lambda^n v \not= \emptyset
    \quad\text{ for all $v \in \Lambda^0$ and $n \in \N^k$.}
\end{equation}
So each column and each row of each $A^n$ is nonzero. It then follows from
\cite[Corollary~4.2]{AHLRS2} and \cite[Lemma~A.1]{aHLRS2013} that each $\rho(A^n) \ge 1$
and that $n \mapsto \rho(A^n)$ is a homomorphism of $\N^k$ into the multiplicative
semigroup $[1,\infty)$.\label{pg:mpctve} Hence $\rho(A^n) = \rho(\Lambda)^n$ for all $n$.

The Toeplitz algebra $\Tt C^*(\Lambda)$ of the $k$-graph $\Lambda$ is the universal
$C^*$-algebra generated by elements $\{t_\lambda \mid \lambda \in \Lambda\}$ such that
\begin{itemize}
\item[(TCK1)] $\{t_v \mid v \in \Lambda^0\}$ is a family of mutually orthogonal
    projections;
\item[(TCK2)] $t_\mu t_\nu = t_{\mu\nu}$ whenever $s(\mu) = r(\nu)$;
\item[(TCK3)] $t_\mu^* t_\mu = t_{s(\mu)}$ for all $\mu$;
\item[(TCK4)] $t_v \ge \sum_{\mu \in v\Lambda^n} t_\mu t^*_\mu$ for all $v \in
    \Lambda^0$ and $n \in \N^k$; and
\item[(TCK5)] $t^*_\mu t_\nu = \sum_{\mu\alpha = \nu\beta \in \Lambda^{d(\mu) \vee
    d(\nu)}} t_\alpha t^*_\beta$ for all $\mu,\nu$.
\end{itemize}
The $C^*$-algebra $C^*(\Lambda)$ of $\Lambda$ is the quotient of $\Tt C^*(\Lambda)$ by
the ideal generated by $\{t_v - \sum_{\mu \in v\Lambda^n} t_\mu t^*_\mu \mid v
\in\Lambda^0, n \in \N^k\}$. It is universal for families $\{s_\lambda \mid
\lambda\in\Lambda\}$ satisfying (TCK1)--(TCK3) and
\begin{itemize}
\item[(CK)] $s_v = \sum_{\mu \in v\Lambda^n} s_\mu s^*_\mu$ for all $v,n$.
\end{itemize}

\section{The main result, and the factor types of KMS states on the Toeplitz algebra}\label{sec:mainthm}

Let $\Lambda$ be a strongly connected finite $k$-graph. Our main result is a
characterisation of the factor types of the extremal KMS states of $\Tt C^*(\Lambda)$ and
$C^*(\Lambda)$ studied in \cite{AHLRS2}. Each $r \in [0,\infty)^k$ determines an action
$\alpha^r\colon\R \to \Aut \Tt C^*(\Lambda)$ by $\alpha^r_t(t_\lambda) = e^{itr\cdot
d(\lambda)} t_\lambda$, and this descends to an action, also denoted~$\alpha^r$, on
$C^*(\Lambda)$. Corollary~4.6 of \cite{AHLRS2} shows that, up to rescaling, the only
value of $r$ for which~$\alpha^r$ admits a KMS state that factors through $C^*(\Lambda)$
is $r = \log\rho(\Lambda)$. We write an unadorned $\alpha$ for this dynamics, and call it
the ``preferred dynamics". Corollary~4.6 of \cite{AHLRS2} also shows that there are
$\alpha$-KMS$_\beta$ states for all $\beta \ge 1$, and the only ones that factor through
$C^*(\Lambda)$ occur at $\beta = 1$. Given a state $\phi$, we write $\pi_\phi$ for the
associated GNS representation.

\begin{theorem}\label{thm:main}
Let $\Lambda$ be a strongly connected finite $k$-graph. Let $\alpha$ be the preferred
dynamics and suppose that $\phi$ is an extremal $\alpha$-KMS$_\beta$ state of $\Tt
C^*(\Lambda)$.
\begin{enumerate}
\item\label{it:b>1} Suppose that $\beta > 1$. Then $\pi_\phi(\Tt C^*(\Lambda))''$ is
    the type~$\mathrm{I}_\infty$ factor.
\item\label{it:b=1} Suppose that $\beta = 1$, and write $\overline{\phi}$ for the
    corresponding KMS state of $C^*(\Lambda)$.
    \begin{enumerate}
    \item\label{it:b=1;r=1} If $\rho(\Lambda) = (1, \dots, 1)$, then
        $\pi_\phi(\Tt C^*(\Lambda))'' = \pi_{\overline{\phi}}(C^*(\Lambda))$ is
        the type~$\mathrm{I}_{|\Lambda^0|}$ factor.
    \item\label{it:b=1;r>1} Otherwise, let
    \[
        S := \{\rho(\Lambda)^{d(\mu) - d(\nu)} \mid \mu,\nu \in \Lambda\text{ are cycles}\},
    \]
    and let $\lambda := \sup\{s \in S \mid s < 1\}$. Then $\lambda \in (0,1]$ and
    $\pi_\phi(\Tt C^*(\Lambda))'' = \pi_{\overline{\phi}}(C^*(\Lambda))''$ is the
    injective type~$\mathrm{III}_\lambda$ factor.
    \end{enumerate}
\end{enumerate}
\end{theorem}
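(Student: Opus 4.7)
The plan is to treat the three parts of the theorem in turn, using the groupoid machinery only in the main case.

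For part~(\ref{it:b>1}), I would argue that each extremal $\alpha$-KMS$_\beta$ state $\phi$ at $\beta > 1$ gives rise, via the parametrisation in \cite{AHLRS2}, to a distinguished vertex $v \in \Lambda^0$ and to an explicit identification of the GNS Hilbert space with $\ell^2(\Lambda v)$. Concretely, the KMS condition combined with (TCK3) and (TCK5) lets one compute $\langle \pi_\phi(t_\mu)\Omega_\phi, \pi_\phi(t_\nu)\Omega_\phi\rangle$ and show that the rescaled vectors $\rho(\Lambda)^{\beta d(\mu)/2}\pi_\phi(t_\mu)\Omega_\phi$, $\mu \in \Lambda v$, form an orthonormal basis; the operators $\pi_\phi(t_\mu t_\nu^*)$ then act as matrix units, so $\pi_\phi(\Tt C^*(\Lambda))'' = \Bb(\ell^2(\Lambda v))$, the type~$\mathrm{I}_\infty$ factor.

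For part~(\ref{it:b=1;r=1}), the assumption $\rho(\Lambda) = (1,\dots,1)$ together with \eqref{eq:nosinks} and Perron--Frobenius forces each $A_i$ to be a permutation matrix, so $\Lambda$ is a simple $k$-dimensional cycle and the preferred dynamics $\alpha$ is trivial. The structure of $C^*(\Lambda)$ in this case is a matrix algebra over an abelian $C^*$-algebra (a quotient of $C(\T^k)$), and every extremal KMS$_1$ state---equivalently, every extremal tracial state---is evaluation at a character paired with the normalised trace; the associated GNS representation is surjective onto $M_{|\Lambda^0|}(\C)$, the type~$\mathrm{I}_{|\Lambda^0|}$ factor.

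For part~(\ref{it:b=1;r>1}), the main case, I would invoke the machinery built up in Sections~\ref{sec:S-general}, \ref{sequiv} and \ref{sec:PF}. By \proref{pmainp}, $\pi_{\overline\phi}(C^*(\Lambda))''$ is isomorphic to $W^*(\RR)$ acting on $L^2(\Lambda^\infty,\mueq)$. The Frobenius decomposition $\Lambda^\infty = \bigsqcup_{\omega \in \lms} X_\omega$ from \secref{sec:PF} exhibits $\unb_{X_\omega}$ as a full projection in $W^*(\RR)$, so the type of $W^*(\RR)$ agrees with that of the reduction $W^*(\RR|_{X_\omega})$. \corref{cergodic} gives ergodicity of $\RR^\gamma|_{X_\omega}$, which since $\RR^\gamma \subseteq \RR^D$ forces $\RR^D|_{X_\omega}$ to be ergodic too, so \corref{ceqConnes} applies and identifies $S(W^*(\RR|_{X_\omega}))$ with the essential range of the Radon--Nikodym cocycle of $\mueq|_{X_\omega}$; amenability of the groupoid $\G$ yields injectivity of the resulting factor.

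The central remaining computation, and the main obstacle, is to identify this essential range with $\overline{\{\rho(\Lambda)^g : g \in \Pp_\Lambda\}}$ and to translate the resulting Connes $S$-invariant into the stated value of $\lambda$. The Radon--Nikodym cocycle at a pair $(\mu z, \nu z) \in \RR|_{X_\omega}$ reduces to $\rho(\Lambda)^{d(\nu)-d(\mu)}$, and by \proref{properiod} the exponent lies in $\Pp_\Lambda$ whenever $r(\mu) \sim r(\nu)$, giving one inclusion. For the reverse, each cycle $\lambda$ at a vertex of $\omega$ produces the value $\rho(\Lambda)^{d(\lambda)}$ on a $\mueq$-positive set, and since cycle degrees generate $\Pp_\Lambda$ the image is dense. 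The hypothesis that $\Lambda$ is not a simple $k$-cycle, via part~(\ref{it:b=1;r=1}), guarantees some $\rho(A_i) > 1$; strong connectedness then produces a cycle of positive $i$-degree and hence some $g \in \Pp_\Lambda$ with $\rho(\Lambda)^g < 1$, forcing $\lambda > 0$. Since the only closed nontrivial subgroups of $(0,\infty)$ are $\lambda^\Z$ for $\lambda \in (0,1)$ and $(0,\infty)$ itself, the classification collapses to the stated $\mathrm{III}_\lambda$ with $\lambda \in (0,1]$.
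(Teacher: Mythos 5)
Your treatment of parts (\ref{it:b=1;r=1}) and (\ref{it:b=1;r>1}) is essentially sound. For (\ref{it:b=1;r>1}) you follow the same route as the paper (Proposition~\ref{pmainp}, reduction to $X_\omega$, Corollaries~\ref{cergodic} and~\ref{ceqConnes}, then the essential-range computation); the only quibbles are an attribution (the fact that $r(\mu)\sim r(\nu)$ and $s(\mu)=s(\nu)$ force $d(\mu)-d(\nu)\in\Pp_\Lambda$ is Lemma~\ref{twopaths}/Corollary~\ref{cor:C<->gpd}, not Proposition~\ref{properiod}), and that your reverse inclusion via single cycles only yields the values $\rho(\Lambda)^{\pm d(\lambda)}$ and needs the multiplicative group structure of $S\setminus\{0\}$ to reach all of $\{\rho(\Lambda)^g \mid g\in\Pp_\Lambda\}$; the paper instead exhibits, for each $g=m-n\in\Pp_\Lambda$, a set of positive left counting measure on which $D=\rho(\Lambda)^{n-m}$ directly. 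For (\ref{it:b=1;r=1}) your route through the structure of $C^*(\Delta_k/\Pp_\Lambda)$ as a matrix algebra over $C^*(\Pp_\Lambda)$ is a legitimate alternative to the paper's, which instead observes that $\RR$ becomes the full equivalence relation on the finite set $\Lambda^\infty\cong\Lambda^0$, so $W^*(\RR)\cong\Mat_{\Lambda^0}(\C)$.

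The genuine gap is in part (\ref{it:b>1}). The vectors $\pi_\phi(t_\mu)\Omega_\phi$, $\mu\in\Lambda v$, are indeed pairwise orthogonal, but their common norm is $\sqrt{m^\epsilon_v}$, independent of $d(\mu)$ (so your rescaling is not the right one), and --- more seriously --- they do not span the GNS space, and their closed span is not even invariant under $\pi_\phi(\Tt C^*(\Lambda))$: applying $t_\mu t_\nu^*$ to $t_\lambda\Omega_\phi$ produces, via (TCK5), terms $t_{\mu\gamma}t_{\gamma'}^*\Omega_\phi$ in which $\gamma'$ is a nontrivial path, and $t_{\gamma'}^*\Omega_\phi$ is a nonzero vector orthogonal to every $t_\sigma\Omega_\phi$. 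So the claimed matrix-unit action fails as stated. The paper's fix is to compress by the projection $q_v=\prod_{i=1}^k\big(t_v-\sum_{\gamma\in v\Lambda^{e_i}}t_\gamma t_\gamma^*\big)$: the vectors $\xi_\lambda=[\sqrt{y_v}\,t_\lambda q_v]$ do span an invariant subspace (the offending terms with $\gamma'\neq v$ are killed because $q_v t_{\gamma'}=0$), the operators $\pi_\phi(t_\mu q_v t_\nu^*)$ are genuine matrix units there, and one then uses extremality of $\phi$ (hence factoriality of $\pi_\phi(\Tt C^*(\Lambda))''$) to conclude that restriction to this \emph{proper} invariant subspace is a von Neumann algebra isomorphism onto $\Bb(\ell^2(\Lambda v))$. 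Some device of this kind --- a minimal projection producing an invariant copy of $\ell^2(\Lambda v)$, together with the factoriality argument to pass back to the whole algebra --- is needed to complete your sketch.
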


The rest of the paper mainly consists of the proof of Theorem~\ref{thm:main}, which will
be completed in Section~\ref{sec:typeclass}. Most of the work lies in
statement~(\ref{it:b=1;r>1}); in particular, (\ref{it:b>1}) is fairly straightforward,
and the proof works for any $\alpha^r$ and any $\beta$ with $\beta r > \log\rho(\Lambda)$
coordinatewise.

Theorem~6.1 of \cite{AHLRS1} describes KMS states of $\Tt C^*(\Lambda)$ as follows. Take
$r \in [0,\infty)^k$ and $\beta \in \R$ such that $\beta r > \log\rho(\Lambda)$
coordinatewise. Then for each $v$, the series $\sum_{\mu\in\Lambda v} e^{-\beta r\cdot
d(\mu)}$ converges to some $y_v \geq 1$. Set $y = (y_v)_{v \in \Lambda^0}$. For each
$\epsilon \in [0,\infty)^{\Lambda^0}$ such that $\epsilon \cdot y = 1$, define $\Delta
\colon \Lambda \to \R^+$ by $\Delta_\lambda = e^{-\beta r \cdot
d(\lambda)}\epsilon_{s(\lambda)}$. Let $\{h_\lambda \mid \lambda \in \Lambda\}$ be the
orthonormal basis for $\ell^2(\Lambda)$, and $\pi_S : \Tt C^*(\Lambda) \to
\Bb(\ell^2(\Lambda))$ the path-space representation $\pi_S(t_\lambda) h_\mu =
\delta_{s(\lambda), r(\mu)} h_{\lambda\mu}$ \cite[Example~7.7]{RS}. Then the formula
$\varphi_\epsilon(a) = \sum_{\lambda\in\Lambda} \Delta_\lambda (\pi_S(a)h_\lambda |
h_\lambda)$ defines an $\alpha$-KMS$_\beta$ state $\varphi_\epsilon$ of $\Tt
C^*(\Lambda)$. Moreover, putting $m^\epsilon := \prod_{i=1}^k(1-e^{-\beta
r_i}A_i)^{-1}\epsilon$, we have $\varphi_\epsilon(t_\mu t_\nu^*)=
\delta_{\mu,\nu}e^{-\beta r\cdot d(\mu)}m^\epsilon_{s(\mu)}$ for all $\mu,\nu$. The map
$\epsilon \mapsto \varphi_\epsilon$ from $\Sigma_\beta = \{ \epsilon \in
[0,\infty)^{\Lambda^0} \mid \epsilon \cdot y = 1\}$ to the simplex of
$\alpha$-KMS$_\beta$ states of $\Tt C^*(\Lambda)$ is an affine isomorphism. The simplex
$\Sigma_\beta$ is the closed convex hull of $\{y_v^{-1}\delta_v \mid v \in \Lambda^0\}$.

\begin{proposition}\label{prp:toeplitzfactors}
Let $\Lambda$ be a strongly connected finite $k$-graph. Suppose that $r \in [0,\infty)^k$
and $\beta > 0$ satisfy $\beta r > \log\rho(\Lambda)$ coordinatewise. Fix $v \in
\Lambda^0$ and let $\epsilon := y_v^{-1} \delta_v \in [0,\infty)^{\Lambda^0}$. Let
$\pi_{\epsilon}:=\pi_{\varphi_\epsilon}$ be the GNS representation associated to the
KMS$_\beta$ state $\varphi_\epsilon$. Then $\pi_\epsilon(\Tt C^*(\Lambda))''$ is a factor
of type I$_\infty$, and $\pi_\epsilon(t_\mu) \mapsto \pi_S(t_\mu)|_{\ell^2(\Lambda v)}$
determines a von Neumann algebra isomorphism $\pi_\epsilon(\Tt C^*(\Lambda))'' \cong
\Bb(\ell^2(\Lambda v))$.
\end{proposition}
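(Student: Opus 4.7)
The plan is to realize the GNS representation of $\varphi_\epsilon$ (for $\epsilon = y_v^{-1}\delta_v$) as, up to canonical unitary equivalence, the restricted path-space representation $\pi_v := \pi_S|_{\ell^2(\Lambda v)}$, and then to identify the resulting von Neumann algebra with $\Bb(\ell^2(\Lambda v))$. First, the subspace $\ell^2(\Lambda v)$ is $\pi_S$-invariant (because $\pi_S(t_\lambda)$ preserves sources), and in the series formula for $\varphi_\epsilon$ given in the excerpt only the terms with $s(\lambda)=v$ survive when $\epsilon = y_v^{-1}\delta_v$. Hence $\varphi_\epsilon = \omega\circ\pi_v$, where $\omega(T) := \mathrm{Tr}(\rho\, T)$ and $\rho$ is the positive diagonal trace-class operator on $\ell^2(\Lambda v)$ satisfying $\rho\, h_\lambda = y_v^{-1} e^{-\beta r\cdot d(\lambda)} h_\lambda$. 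All diagonal entries of $\rho$ are strictly positive, so $\rho$ has trivial kernel and $\omega$ is a faithful normal state on $\Bb(\ell^2(\Lambda v))$.

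The main obstacle is to show that $\pi_v(\Tt C^*(\Lambda))'' = \Bb(\ell^2(\Lambda v))$, which I would do by producing all rank-one operators in the image. For each $i = 1,\dots,k$, the element
\[ P_i \;:=\; t_v - \sum_{\alpha\in v\Lambda^{e_i}} t_\alpha t_\alpha^* \]
is a well-defined projection in $\Tt C^*(\Lambda)$ (using (TCK4) for positivity and (TCK5) to check $P_i^2 = P_i$); note that this element vanishes in the Cuntz--Krieger quotient, so the argument genuinely needs the Toeplitz algebra. The $P_i$ pairwise commute, and in the faithful path-space representation $\pi_S(P_i)$ is the projection onto $\overline{\mathrm{span}}\{h_\lambda : r(\lambda) = v,\ d(\lambda)_i = 0\}$, so $p := P_1 P_2 \cdots P_k$ satisfies $\pi_S(p) = |h_v\rangle\langle h_v|$. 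Combined with $\pi_v(t_\mu) h_v = h_\mu$ and $\pi_v(t_\nu^*) h_\nu = h_v$ for $\mu,\nu \in \Lambda v$, this yields $\pi_v(t_\mu\, p\, t_\nu^*) = |h_\mu\rangle\langle h_\nu|$, so $\pi_v(\Tt C^*(\Lambda))$ contains every rank-one operator on $\ell^2(\Lambda v)$ and is weakly dense in $\Bb(\ell^2(\Lambda v))$.

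To conclude, I invoke the standard GNS principle that if $\varphi = \omega\circ\pi$ with $\omega$ a faithful normal state on a von Neumann algebra $M$ and $\pi(A)$ weakly dense in $M$, then $\pi_\varphi$ is canonically equivalent to $\pi_\omega\circ\pi$ and $\pi_\omega$ extends to a normal *-isomorphism $M \to \pi_\varphi(A)''$. Applying this with $M = \Bb(\ell^2(\Lambda v))$ and $\pi = \pi_v$ yields a normal *-isomorphism $\pi_\epsilon(\Tt C^*(\Lambda))'' \to \Bb(\ell^2(\Lambda v))$ sending $\pi_\epsilon(t_\mu)$ to $\pi_S(t_\mu)|_{\ell^2(\Lambda v)}$. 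Finally, \eqref{eq:nosinks} ensures $\Lambda v = \bigsqcup_n \Lambda^n v$ is countably infinite, so this factor is of type $\mathrm{I}_\infty$, completing the proof.
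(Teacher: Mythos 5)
Your proof is correct, and its technical core is the same as the paper's: both arguments hinge on the Toeplitz projection $q_v = \prod_{i=1}^k\big(t_v - \sum_{\alpha\in v\Lambda^{e_i}} t_\alpha t_\alpha^*\big)$ (your $p$), whose path-space image is the rank-one projection onto $\C h_v$, together with the matrix units $t_\mu q_v t_\nu^*$ for $\mu,\nu\in\Lambda v$. Where you genuinely diverge is in how the GNS representation is identified with $\Bb(\ell^2(\Lambda v))$. The paper works inside the GNS space: it first quotes extremality of $\varphi_\epsilon$ to know that $\pi_\epsilon(\Tt C^*(\Lambda))''$ is a factor, builds the orthonormal vectors $\xi_\lambda = [\sqrt{y_v}\,t_\lambda q_v]$, and uses factoriality to see that restriction to the invariant subspace they span is faithful; only then does it identify the restricted algebra with $\Bb(\Hh)$ via the compacts. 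You instead factor the state as $\varphi_\epsilon = \omega\circ\pi_v$ with $\omega = \operatorname{Tr}(\rho\,\cdot)$ a faithful normal state on $\Bb(\ell^2(\Lambda v))$, show $\pi_v$ has weakly dense range, and invoke the standard uniqueness/normality principle for GNS representations. This packaging buys you something real: factoriality emerges as a conclusion rather than being an input, so you never need to cite extremality of $\varphi_\epsilon$. The only points you assert without proof are routine and true: the $P_i$ commute (a short (TCK5) computation --- and in any case your argument only uses the operator identity $\pi_S(p)=$ projection onto $\C h_v$, which you do verify), and $\operatorname{Tr}\rho = y_v^{-1}\sum_{\lambda\in\Lambda v}e^{-\beta r\cdot d(\lambda)} = 1$, which is exactly the definition of $y_v$.
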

\begin{proof}
By \cite[Theorem~6.1]{AHLRS1}, the state $\varphi_\epsilon$ is an extremal point in the
simplex of KMS$_\beta$ states. Hence $\pi_\epsilon(\Tt C^*(\Lambda))''$ is a factor (see,
for example \cite[Theorem~5.3.30(3)]{BR1997a}).

Let $q_v := \prod_{i=1}^k (t_v - \sum_{\gamma \in v\Lambda^{e_i}} t_\gamma t_\gamma^*)$.
We have $\pi_S(q_v) h_v = h_v$, and for $\mu \in \Lambda\setminus v\Lambda$ we have $q_v
\le t_v \perp t_{r(\mu)}$ giving $\pi_S(q_v) h_\mu = 0$. For $\mu \in v\Lambda \setminus
\{v\}$, we have $\mu = \mu_1\mu'$ for some $\mu_1 \in \bigcup_i v\Lambda^{e_i}$, and then
$\pi_S(t_v - t_{\mu_1} t^*_{\mu_1}) h_\mu = 0$, forcing $\pi_S(q_v)h_\mu = 0$. Hence
$\pi_S(q_v)$ is the projection onto $\C h_v$, and $\pi_S(q_v \Tt C^*(\Lambda) q_v) = \C
\pi_S(q_v)$. Since $\pi_S$ is faithful by \cite[Corollary~7.7]{RS}, we then have $q_v \Tt
C^*(\Lambda) q_v = \C q_v$, and hence $\pi_\epsilon(q_v) \pi_\epsilon(\Tt C^*(\Lambda))''
\pi_\epsilon(q_v) = \C \pi_\epsilon(q_v)$. Thus $\pi_\epsilon(q_v)$ is either a minimal
projection or zero. Since $\epsilon = y_v^{-1}\delta_v$, we have
\begin{equation}\label{eq:phieps}\textstyle
\varphi_\epsilon(q_v)
    = \sum_{\lambda\in\Lambda} \Delta_\lambda (\pi_S(q_v)h_\lambda \mid h_\lambda)
    = \Delta_v
    = e^{-\beta r \cdot d(v)}\epsilon_v
    = y^{-1}_v > 0.
\end{equation}
Hence $\pi_\epsilon(q_v) \not= 0$, and so $\pi_\epsilon(\Tt C^*(\Lambda))''$ has a
minimal projection $\pi_\epsilon(q_v)$, and is therefore of type~I.

For $\lambda \in \Lambda v$, let $\xi_\lambda$ denote the class of $\sqrt{y_v}t_\lambda
q_v$ in the GNS space $\Hh_\epsilon$ of $\varphi_\epsilon$. For $\lambda,\mu \in \Lambda
v$, we have
\begin{align*}
\big(\xi_\lambda \mid \xi_\mu\big)_{\Hh_\epsilon}
    = y_v\varphi_\epsilon (q_v t_\mu^* t_\lambda q_v)
    &= y_v\sum_{(\gamma,\gamma') \in \Lmin(\mu,\lambda)} \varphi_\epsilon(q_v t_\gamma t_{\gamma'}^* q_v)\\
    &= y_v\sum_{(\gamma,\gamma') \in \Lmin(\mu,\lambda)} \varphi_\epsilon(\delta_{v,\gamma} \delta_{v,\gamma'}q_v)\\
    &= y_v\delta_{\lambda,\mu} \delta_{s(\lambda), v}\,\varphi_\epsilon(q_v)
    = \delta_{\lambda,\mu} \delta_{s(\lambda), v}
\end{align*}
by~\eqref{eq:phieps}. So $\{\xi_\lambda \mid \lambda \in \Lambda v\}$ is an infinite
orthonormal set in $\Hh_\epsilon$. Define $\Hh := \clsp\{\xi_\lambda \mid \lambda \in
\Lambda v\}$. For $\lambda \in \Lambda v$ and $\mu, \nu \in \Lambda$, we have
\[
\pi_\epsilon(t_\mu t^*_\nu) \xi_\lambda
    = \sqrt{y_v} [t_\mu t^*_\nu t_\lambda q_v]
    = \sqrt{y_v}\Big[\sum_{(\gamma,\gamma') \in \Lmin(\nu,\lambda)} t_{\mu\gamma} (q_v t_{\gamma'})^*\Big].
\]
We have $\pi_S(t_{\gamma'})\ell^2(\Lambda) = \clsp\{h_{\gamma'\lambda} \mid \lambda \in
s(\gamma')\Lambda\}$, which is orthogonal to $\pi_S(q_v)\ell^2(\Lambda) = \C \delta_v$
unless $\gamma' = v$. Since $\pi_S$ is injective, we deduce that $\pi_\epsilon(t_\mu
t^*_\nu) \xi_\lambda$ is zero unless $\lambda = \nu\lambda'$ and $\Lmin(\nu,\lambda) =
\{(\lambda',v)\}$, giving $\pi_\epsilon(t_\mu t^*_\nu) \xi_\lambda = \xi_{\mu\lambda'}
\in \Hh$. So $\Hh$ is invariant for $\pi_\epsilon(\Tt C^*(\Lambda))$, and hence also for
its double commutant. Since $\pi_\epsilon(\Tt C^*(\Lambda))''$ is a factor, it follows
that the restriction map $T \to T|_\Hh$ is a von Neumann algebra isomorphism.

For $\mu,\nu \in \Lambda v$, we have $\pi_\epsilon(t_\mu q_v t^*_\nu) \xi_\lambda =
\delta_{\nu,\lambda} \xi_\mu$, and so $\pi_\epsilon(t_\mu q_v t^*_\nu)$ is the rank-one
operator $\theta_{\xi_\mu, \xi_\nu}$. So $\Kk(\Hh) \subseteq \pi_\epsilon(\Tt
C^*(\Lambda))$, giving $\pi_\epsilon(\Tt C^*(\Lambda))''|_{\Hh} = \Bb(\Hh)$. The formula
$U \colon \xi_\lambda \mapsto h_\lambda$ defines a unitary isomorphism $\Hh \cong
\ell^2(\Lambda v)$, and we have $U\pi_\epsilon(t_\mu)|_{\Hh} U^* = \pi_S(t_\mu)$.
\end{proof}

\section{The Connes invariant of the von Neumann algebra of an equivalence
relation}\label{sec:S-general}

Our analysis of the types of extremal KMS states on the $C^*$-algebra of a $k$-graph will
rely on identifying the associated factor with the von Neumann algebra of the equivalence
relation determined by the $k$-graph groupoid. Computing the Connes invariant $S$ of a
factor is in general a difficult problem; but it simplifies drastically in the presence
of a faithful normal state $\varphi$ with factorial centraliser. In this instance,
Connes' result \cite[Th\'eor\`eme~3.2.1]{Connes:ASENS73} implies that $S$ is equal to the
spectrum of the modular operator for $\varphi$. In this section we describe what this
result says for von Neumann algebras of equivalence relations; this result is surely
known, but we provide a proof as we were unable to find a reference.

Let us review Feldman and Moore's construction of the von Neumann algebra $W^*(\QQ)$ of a
countable Borel equivalence relation $\QQ$ on a space $X$ and a quasi-invariant measure
$\mu$ on $X$ (see \cite[Section~2]{FM1} and \cite[Section~2]{FM2}). Recall that a Borel
equivalence relation $\QQ$ on a standard Borel space $X$ is said to be \emph{countable}
if $\{y \mid (x,y) \in \QQ\}$ is countable for each $x\in X$. A Borel measure $\mu$ on
$X$ is \emph{quasi-invariant} for~$\QQ$ if, whenever $\mu(A) = 0$, the $\QQ$-saturation
$\QQ(A) := \bigcup_{x \in A} \{y \mid (x,y) \in \QQ\}$ of $A$ is also $\mu$-null.  Equip
$\QQ$ with the left counting measure $\nu$:
$$
\nu(C)=\int_X\big|\{y\mid (x,y)\in C\}\big|\,d\mu(x)
    \quad \text{for Borel $C\subseteq \QQ$}.
$$
(Unlike Feldman and Moore, we work with left counting measure, not right, as this is
consistent with Renault's representation theory of groupoids.) Identifying the diagonal
of $\QQ$ with $X$ via $(x,x) \mapsto x$, the restriction of $\nu$ to the diagonal
coincides with~$\mu$. A Borel subset $A$ of $\QQ$ is called a \emph{bisection} if the
projection maps $(x,y)\mapsto x$ and $(x,y)\mapsto y$ are injective on $A$. Consider the
$*$-algebra $\C[\QQ]$ of functions $f\in L^\infty(\QQ,\nu)$ supported on finitely many
Borel bisections of $\QQ$, under the convolution
$$
(f_1*f_2)(x,z)=\sum_{(x,y) \in \QQ} f_1(x,y)f_2(y,z)
$$
and involution $f^*(x,y)=\overline{f(y,x)}$. Write $D$ for the Radon--Nikodym cocycle on
$\QQ$ determined by $\mu$. Then $\C[\QQ]$ has a representation~$\pi$ on $L^2(\QQ,d\nu)$
given by
\begin{equation}\label{eq:pirep}
(\pi(f)\xi)(x,z)=\sum_{(x,y) \in \QQ}D(x,y)^{-1/2}f(x,y)\xi(y,z),
\end{equation}
and by definition, $W^*(\QQ) = \pi(\C[\QQ])''$. The characteristic function
$\unb_{\{(x,x) \mid x \in X\}}$ is a cyclic separating vector for $W^*(\QQ)$, so the
formula
\begin{equation}\label{eq:varphistate}
\varphi(f)=\int_X f(x,x)d\mu(x)\quad\text{for } f\in\C[\QQ]
\end{equation}
defines a faithful normal state $\varphi$ of $W^*(\QQ)$.

\begin{proposition}\label{prp:centralizer}
Let $\QQ$ be a countable measurable equivalence relation on a standard Borel space
$(X,\mu)$ with Radon--Nikodym cocycle $D$. Let $\QQ^D$ denote the finer equivalence
relation
$$
x\sim_{\QQ^D}y\text{ if and only if } x\sim_{\QQ}y\text{ and } D(x,y)=1.
$$
Identify $W^*(\QQ^D)$ with the strong-operator closure of the subalgebra $$\{\pi(f)\mid f
\in \C[\QQ],\ \operatorname{supp}(f) \subseteq \QQ^D\}\subset W^*(\QQ).
$$
Then $W^*(\QQ^D)$ is equal to the centraliser of the state $\varphi$ on $W^*(\QQ)$ defined by~\eqref{eq:varphistate}.
\end{proposition}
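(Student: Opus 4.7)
The plan is to invoke Tomita--Takesaki theory: compute the modular automorphism group $\sigma^\varphi$ of $\varphi$ on $W^*(\QQ)$ explicitly, and use the well-known fact that the centraliser of $\varphi$ equals the fixed-point algebra $W^*(\QQ)^{\sigma^\varphi}$.

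First I would identify the modular data. Taking $\xi_0 := \unb_{\{(x,x) \mid x\in X\}}$ as the cyclic-separating vector, a direct calculation from~\eqref{eq:pirep} shows that the Tomita operator $S : \pi(f)\xi_0 \mapsto \pi(f^*)\xi_0$ satisfies $(S\eta)(x,y) = D(x,y)^{-1}\overline{\eta(y,x)}$ on the dense subspace $\pi(\C[\QQ])\xi_0$. Polar decomposition then identifies $\Delta$ with multiplication by $D$ on $L^2(\QQ,\nu)$ and $J$ with the antiunitary $(J\xi)(x,y) = D(x,y)^{-1/2}\overline{\xi(y,x)}$; antiunitarity of $J$ rests on the change-of-variables identity $\int g(y,x)\,d\nu(x,y) = \int g(x,y)D(x,y)^{-1}\,d\nu(x,y)$. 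A short computation using the cocycle property $D(x,z) = D(x,y)D(y,z)$ then gives $\sigma^\varphi_t(\pi(f)) = \pi(D^{it}f)$ for every $f\in \C[\QQ]$.

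The inclusion $W^*(\QQ^D) \subseteq W^*(\QQ)^{\sigma^\varphi}$ is immediate from this formula: if $\operatorname{supp}(f) \subseteq \QQ^D$, then $D^{it}f = f$, so $\pi(f)$ is $\sigma^\varphi$-fixed, and weak-operator closedness of the fixed-point algebra completes the inclusion. For the reverse, $\sigma^\varphi_t = \operatorname{Ad}\Delta^{it}$ means $a\in W^*(\QQ)^{\sigma^\varphi}$ iff $a$ commutes with $\Delta$, so $W^*(\QQ)^{\sigma^\varphi} = W^*(\QQ)\cap\{D\}'$. The projection $P$ onto $L^2(\QQ^D,\nu|_{\QQ^D})$ is the spectral projection of $\Delta$ at eigenvalue $1$; any such $a$ therefore preserves the decomposition $L^2(\QQ,\nu) = PL^2\oplus P^\perp L^2$. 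Using Feldman--Moore's identification of elements of $W^*(\QQ)$ with classes of bounded measurable functions $\tilde a$ on $\QQ$ acting by convolution, the condition $[a,D]=0$ forces $\tilde a$ to be supported in $\QQ^D$, which gives $a\in W^*(\QQ^D)$.

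The main obstacle will be the explicit modular computation: the asymmetric factor $D^{-1/2}$ in~\eqref{eq:pirep}, together with the use of left (rather than right) counting measure, requires careful tracking of the cocycle identity in order to pin down the formulas for $J$ and $\Delta$. A secondary technical point is the final Feldman--Moore step identifying $W^*(\QQ)\cap\{D\}'$ with $W^*(\QQ^D)$; intuitively, a ``matrix entry'' of $a$ at $(x,y)$ is rescaled by $D(x,y)^{it}$ under $\sigma^\varphi$, so invariance forces the entry to vanish off $\QQ^D$, but making this rigorous requires the Feldman--Moore realisation of $W^*(\QQ)$ as a convolution algebra of measurable functions on $\QQ$.
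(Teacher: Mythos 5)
Your argument is correct, and its first half coincides with the paper's: both proofs rest on the identification of the modular operator $\Delta$ of $\varphi$ with multiplication by $D$ on $L^2(\QQ,d\nu)$ (the paper simply cites \cite[Proposition~2.8]{FM2} for this rather than redoing the Tomita computation of $S$, $J$ and $\Delta$), and the inclusion of $W^*(\QQ^D)$ into the centraliser is then immediate in both treatments. Where you genuinely diverge is the reverse inclusion. You pass to the Feldman--Moore ``matrix'' $\tilde a$ of a general element $a \in W^*(\QQ)$, observe that $\sigma^\varphi_t(a)$ has matrix $D^{it}\tilde a$, conclude that modular invariance forces $\operatorname{supp}\tilde a \subseteq \QQ^D$ up to null sets, and then invoke the Feldman--Moore realisation to deduce $a \in W^*(\QQ^D)$. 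That final implication --- that matrix support in a subrelation implies membership in the von Neumann algebra of the subrelation --- is exactly the step you should pin to a precise reference in \cite[Section~2]{FM2}: it is true, but it is the deeper half of the structure theorem, and injectivity of $a \mapsto \tilde a$ alone does not yield it. The paper sidesteps this machinery: writing $M = W^*(\QQ)$, $N = W^*(\QQ^D)$ and $\xi = \unb_{\{(x,x) \mid x \in X\}}$, it notes that the eigenspace $\ker(\Delta - 1) = L^2(\QQ^D, d\nu)$ equals $\overline{N\xi}$, whence $N \subseteq M_\varphi$ and $\overline{M_\varphi\xi} = \overline{N\xi}$; since $N$ is then invariant under the modular group, Takesaki's theorem provides a $\varphi$-preserving conditional expectation $E \colon M \to N$ with $E(a)\xi = pa\xi$ for $p$ the projection onto $\overline{N\xi}$, so any $a \in M_\varphi$ satisfies $E(a)\xi = a\xi$ and hence $a = E(a) \in N$ because $\xi$ is separating. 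The conditional-expectation route gives a shorter and more self-contained proof of the hard inclusion; your route yields in addition an explicit description of the modular flow and of arbitrary elements of the centraliser, at the cost of leaning on the stronger parts of \cite{FM2}.
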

\bp%
Let $M := W^*(\QQ)$ and $N := W^*(\QQ^D) \subseteq M$. Let $\nu$ denote left counting
measure on $\QQ$, and let $\xi := \unb_{\{(x,x) \mid x \in X\}} \in L^2(\QQ, d\nu)$; so
$\varphi$ is the vector state associated to $\xi$. Let $\Delta$ be the modular operator for
$\varphi$. Then \cite[Proposition~2.8]{FM2} implies that $\Delta$ is given by multiplication
by $D$ on $L^2(\QQ,d\nu)$. Hence the eigenspace of $\Delta$ corresponding to the
eigenvalue $1$ is $L^2(\QQ^D, d\nu)$, which is precisely $\overline{N\xi}$.

Hence $N$ is contained in the centraliser $M_\varphi$ of $\varphi$ and
$\overline{N\xi}=\overline{M_\varphi\xi}$. In particular, $N$ is invariant under the
modular group. So  there is a unique $\varphi$-preserving conditional expectation
$E\colon M\to N$. Let $p$ be the projection onto $\overline{N\xi}$. For $a\in M$ we have
$E(a)\xi=pa\xi$. Hence, if $a\in M_\varphi$, then $E(a)\xi=pa\xi=a\xi$, so
$a=E(a)\in N$. Thus $N=M_\varphi$.%
\ep

\begin{corollary} \label{ceqConnes}
Let $\QQ$, $D : \QQ \to (0,\infty)$ and $\QQ^D$ be as in
Proposition~\ref{prp:centralizer}. If $\QQ^D$ is ergodic on $(X,\mu)$, then the Connes
invariant $S(W^*(\QQ))$ is the essential range of $D$.
\end{corollary}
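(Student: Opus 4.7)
The plan is to invoke Connes' \cite[Th\'eor\`eme~3.2.1]{Connes:ASENS73}: if $M$ is a factor carrying a faithful normal state $\varphi$ whose centraliser $M_\varphi$ is itself a factor, then the Connes invariant $S(M)$ coincides with the spectrum of the modular operator $\Delta_\varphi$. Proposition~\ref{prp:centralizer} has already done most of the structural work, so the corollary should follow once we verify the factoriality hypotheses and identify the spectrum of $\Delta_\varphi$.

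First I would check that $M := W^*(\QQ)$ and $N := W^*(\QQ^D)$ are both factors. Any $\QQ$-saturated subset of $X$ is automatically $\QQ^D$-saturated, so the ergodicity hypothesis on $\QQ^D$ forces ergodicity of $\QQ$ as well. The Feldman--Moore correspondence then gives that the von Neumann algebra of each of these ergodic countable Borel equivalence relations is a factor. Proposition~\ref{prp:centralizer} identifies $N$ with the centraliser $M_\varphi$, placing us squarely in the hypothesis of Connes' theorem, which then yields $S(M) = \operatorname{Spec}(\Delta_\varphi)$.

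To finish, I would invoke the identification, established in the proof of Proposition~\ref{prp:centralizer}, of $\Delta_\varphi$ with the operator of multiplication by $D$ on $L^2(\QQ, d\nu)$. The spectrum of multiplication by a measurable function on an $L^2$-space is, by standard spectral theory, the essential range of that function with respect to the underlying measure. Combining this with the previous paragraph gives that $S(W^*(\QQ))$ is the essential range of $D$, as required.

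I do not expect any genuine obstacle here; the argument amounts to assembling three ingredients that are either proved in the preceding proposition or are classical. The one point that merits a sentence of justification is the factoriality of $N = W^*(\QQ^D)$ from ergodicity of $\QQ^D$, which is part of the Feldman--Moore dictionary and can be cited rather than reproved.
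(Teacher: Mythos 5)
Your proposal is correct and follows exactly the paper's own argument: ergodicity of $\QQ^D$ plus Proposition~\ref{prp:centralizer} gives a factorial centraliser, Connes' \cite[Th\'eor\`eme~3.2.1]{Connes:ASENS73} identifies $S(W^*(\QQ))$ with the spectrum of the modular operator, and the latter is multiplication by $D$ on $L^2(\QQ,d\nu)$, whose spectrum is the essential range of $D$. Your extra check that $W^*(\QQ)$ itself is a factor is harmless but not needed, since factoriality of the centraliser already forces factoriality of $W^*(\QQ)$.
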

\bp%
Since $\QQ^D$ is ergodic, Proposition~\ref{prp:centralizer} implies that the centraliser
of $\varphi$ is a factor. So \cite[Th\'eor\`eme~3.2.1]{Connes:ASENS73} shows that
$S(W^*(\QQ))$ is equal to the spectrum of the operator of multiplication by~$D$ on
$L^2(\QQ, d\nu)$. Since this spectrum is exactly the essential range of $D$, the result follows.%
\ep

\begin{remark}
An alternative proof of Corollary~\ref{ceqConnes} is as follows. First
apply~\cite[Proposition 2.11]{FM2}, to see that $S(W^*(\QQ))$ is equal to the ratio set
of $D$, and then argue directly from its definition that this coincides with the
essential range when~$\QQ^D$ is ergodic.
\end{remark}

In our application the equivalence relation will arise as the orbit equivalence relation
defined by the action of a groupoid on its unit space. For the convenience of the reader,
we record the following useful relation between the two Radon--Nikodym cocycles arising
in this situation.

\begin{lemma} \label{lRN}
Let $\G$ be a second countable \'etale groupoid and let $\mu$ be a quasi-invariant
measure on~$\G^0$ with Radon--Nikodym cocycle $c\colon \G\to(0,\infty)$. Consider the
orbit equivalence relation $\QQ$ defined by~$\G$ on~$\G^0$. Then the measure $\mu$ is
quasi-invariant with respect to $\QQ$, and if $D\colon  \QQ\to(0,\infty)$ is the
corresponding Radon--Nikodym cocycle, then there exists a $\QQ$-invariant $\mu$-conull
Borel subset $X\subseteq\G^0$ such that $D(x,y)=c(g)$ for all $(x,y)\in\QQ\cap(X\times
X)$ and $g\in\G^x_y:=\{g\in\G\mid x=r(g), \ y=s(g)\}$.
\end{lemma}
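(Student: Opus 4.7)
The plan is to first establish quasi-invariance of $\mu$ for $\QQ$, and then obtain the pointwise identification $D(x,y)=c(g)$ by defining $D$ locally from $c$ along bisections of $\G$ and invoking uniqueness of Radon--Nikodym derivatives to patch the pieces on a conull $\QQ$-invariant subset $X$.

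For the first step, I would use second countability and \'etaleness to cover $\G$ by a countable family $\{U_n\}$ of open Borel bisections. Each $U_n$ determines a Borel partial bijection $\phi_n := s \circ (r|_{U_n})^{-1}$ of $\G^0$, and quasi-invariance of $\mu$ for $\G$ says precisely that each $\phi_n$ preserves $\mu$-null sets. Denoting by $\pi\colon\G\to\QQ$ the canonical map $\pi(g) := (r(g),s(g))$, any Borel bisection $B$ of $\QQ$ decomposes as a countable disjoint union of Borel sub-bisections $B_n := B\cap\pi(U_n)$, and on each $B_n$ the associated partial bijection of $\G^0$ is a restriction of $\phi_n$. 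Hence every Borel $\QQ$-bisection preserves $\mu$-null sets, which is exactly quasi-invariance of $\mu$ for $\QQ$.

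For the identification, observe that on each $U_n$ the restriction $c\circ(r|_{U_n})^{-1}$ is a version of the Radon--Nikodym derivative $d\phi_n^*\mu/d\mu$ on $r(U_n)$, and the restriction of $D\circ\pi$ to $U_n$ (parametrised the same way) is another version of that same derivative. By uniqueness of Radon--Nikodym derivatives, the two versions agree outside a null Borel subset $N_n\subset U_n$. Setting $N := \bigcup_n N_n$, we obtain a null Borel subset of $\G$ outside of which $c = D\circ\pi$ pointwise. Put $A := r(N)\cup s(N)$: each $r(N_n)$ is $\mu$-null as the $r$-image of a null subset of a bisection, and each $s(N_n)=\phi_n(r(N_n))$ is then also $\mu$-null by quasi-invariance for $\G$, so $A$ is a $\mu$-null Borel subset of $\G^0$. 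Define $X := \G^0\setminus\QQ(A)$. By the first step, $\mu(\QQ(A)) = 0$, so $X$ is $\mu$-conull, and it is $\QQ$-invariant by construction. For $(x,y)\in\QQ\cap(X\times X)$ and $g\in\G^x_y$, the condition $r(g)=x\in X$ forces $g\notin N$, whence $c(g) = D(\pi(g)) = D(x,y)$.

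The main obstacle will be the consistency issue underlying the second step: for distinct $g_1,g_2\in\G^x_y$ one can a priori have $c(g_1)\neq c(g_2)$, since the restriction of $c$ to the isotropy group $\G^x_x$ need not be trivial, and this would conflict with $D$ being a well-defined function of $(x,y)$. The resolution is that any such inconsistency is confined to the null set $N\subseteq\G$, and excluding its projection and $\QQ$-saturation from $\G^0$ produces the required conull invariant set $X$.
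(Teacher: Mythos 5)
Your argument is correct and takes essentially the same route as the paper's: cover $\G$ by countably many open bisections, deduce quasi-invariance of $\mu$ for $\QQ$ from the induced countable family of partial Borel isomorphisms of $\G^0$, and identify $D$ with $c$ on each bisection off a $\mu$-null set before discarding the $\QQ$-saturation of the union of the exceptional sets. The only difference is cosmetic: where you invoke uniqueness of Radon--Nikodym derivatives to match $D\circ\pi$ with $c$ along each bisection, the paper cites \cite[Proposition~2.2]{FM1}, which is precisely that statement.
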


\bp Choose a countable cover $\{V_n\}^\infty_{n=1}$ of $\G$ by open bisections; that is,
open sets $V_n$ on which~$r$ and~$s$ are injective. For each $n$ define $T_n : r(V_n) \to
s(V_n)$ by $T_n(r(g)) = s(g)$ for $g \in V_n$. By quasi-invariance of $\mu$ for $\G$, the
$T_n$ preserve the measure class of $\mu$ and
\[\textstyle
\frac{d(T_n)_*\mu}{d\mu}(s(g)) = c(g)\quad\text{ for $g\in V_n$.}
\]
So if $\mu(A) = 0$, then $\mu\big(\bigcup_n T_n^{-1}(A)\big) = 0$ too. Since $\bigcup_n
T_n^{-1}(A)$ is precisely the $\QQ$-saturation of $A$, the measure $\mu$ is
quasi-invariant for $\QQ$.

By Proposition~2.2 of \cite{FM1} applied to the Borel isomorphism $T_n : r(V_n) \to
s(V_n)$ with graph in $\QQ$, there is a $\mu$-null $Y_n \subseteq r(V_n)$ such that for
$g \in V_n \setminus r^{-1}(Y_n)$, we have
$$
D(r(g), s(g)) = D(T_n^{-1}(s(g)),s(g)) = \frac{d (T_n)_*\mu}{d\mu}(s(g)) = c(g).
$$
Now $Y := \bigcup_n Y_n$ is $\mu$-null, so its $\QQ$-saturation $\QQ(Y)$ is $\mu$-null
and $\QQ$-invariant. Thus $X := \G^0 \setminus \QQ(Y)$ suffices. \ep

\begin{remark}\label{rRN}
As a byproduct we see that for $\mu$-a.e.~$x\in\G^0$ we have $c(g)=1$ for all
$g\in\G^x_x$, since $D(x,x)=1$ for $\mu$-a.e.~$x$.
\end{remark}

\section{Equivalence relations and KMS states of \texorpdfstring{$C^*(\Lambda)$}{C*(Lambda)}} \label{sequiv}

We will now use the groupoid picture for the $C^*$-algebra of a strongly connected finite
$k$-graph. We briefly recount the construction of the groupoid $\G$ associated to a
$k$-graph $\Lambda$ and refer to \cite{KP} or Section~12 of \cite{AHLRS2} for more
details.

The unit space of $\G$ is the space $\Lambda^\infty$ of infinite paths in $\Lambda$,
which is defined as follows. Let~$\Omega_k$ denote the $k$-graph with objects $\N^k$,
morphisms $\{(m,n) \in \N^k \times \N^k \mid m \le n\}$, structure maps $r(m,n) = m$,
$s(m,n) = n$ and $d(m,n) = n-m$, and composition $(m,n)(n,p) = (m,p)$. Then an infinite
path $x\in\Lambda^\infty$ is a degree-preserving functor $\Omega_k\to\Lambda$.

For $n\in\N^k$ denote by $\sigma^n$ the shift on $\Lambda^\infty$ corresponding to $n$,
so $\sigma^n(x)(p,q)=x(p+n,q+n)$. Then, as a set,
$$
\G=\{(x,g,y)\in\Lambda^\infty\times\Z^k\times\Lambda^\infty\mid \sigma^{g+n}(x)=\sigma^n(y)
    \text{ for some $n \in \N^k$}\}.
$$
The source and range maps are $s(x,g,y) = y$ and $r(x,g,y) = x$, and composition is
$$
(x,g,y)(y,h,z)=(x,g+h,z).
$$
For $x \in \Lambda^\infty$ and $\mu \in \Lambda$ with $r(x):=x(0,0) = s(\mu)$, there is a
unique $\mu x \in \Lambda^\infty$ such that $(\mu x)(0,d(\mu)) = \mu$ and
$\sigma^{d(\mu)}(\mu x) = x$. The sets $Z(\mu,\nu) = \{(\mu x, d(\mu) - d(\nu),\nu x)
\mid x\in\Lambda^\infty,\ s(\mu)=s(\nu)=r(x)\}$ indexed by pairs $\mu,\nu \in \Lambda$
form a basis of compact open sets for the topology.

There is an isomorphism $C^*(\Lambda) \cong C^*(\G)$ that carries each $s_\lambda$
to~$\unb_{Z(\lambda, s(\lambda))}$. The preferred dynamics~$\alpha$ on
$C^*(\Lambda)=C^*(\G)$ corresponds to the cocycle $c\colon\G\to\R$ given by
$$
c(x,g,y)=g\cdot\log\rho(\Lambda).
$$
That is, for $f\in C_c(\G)\subset C^*(\G)$ we have
$$
\alpha_t(f)(x,g,y)=e^{it c(x,g,y)}f(x,g,y) = \rho(\Lambda)^{itg} f(x,g,y)
$$
for all $t\in\R$ and $(x,g,y)\in\G$.

We now briefly recap the description of the $\alpha$-KMS states of $C^*(\Lambda)$
from~\cite{AHLRS2}. By \cite[Proposition~8.1]{AHLRS2}, there is a unique Borel
probability measure $\mueq$ on $\Lambda^\infty$ with Radon--Nikodym cocycle $e^{-c}$. Let
\[
\Per \Lambda := \{m - n \mid m,n \in \N^k, \sigma^m(x) = \sigma^n(x)
    \text{ for all }x \in \Lambda^\infty\}.
\]
Then $\Per\Lambda$ is a subgroup of $\Z^k$, and \cite[Theorem 7.1]{AHLRS2} describes a
one-to-one correspondence between the extremal $\alpha$-KMS$_1$-states on $C^*(\G)$ and
the characters of $\Per\Lambda$: the state $\varphi_\chi$ corresponding to a character
$\chi \in (\Per\Lambda)\widehat{\;}$ is given by
\begin{equation}\label{defphichi}
\varphi_\chi(f)=\int_{\Lambda^\infty}\sum_{g\in\Per\Lambda}\chi(g)f(x,g,x)\,d\mueq(x)
    \quad\text{ for $f\in C_c(\G)$}.
\end{equation}
Let $\pi_{\varphi_\chi}$ denote the GNS representation associated to $\varphi_\chi$. Our
goal is to understand the corresponding von Neumann algebra
$\pi_{\varphi_\chi}(C^*(\G))''$. The following technical result from~\cite{AHLRS2} will
play a key role.

\begin{proposition}{\cite[Lemma 12.1]{AHLRS2}} \label{pAHLRS}
Let $\Lambda$ be a strongly connected finite $k$-graph, and $\G$ be the associated
groupoid. For $\meae$ $x\in\Lambda^\infty$, we have $\G^x_x = \{x\} \times \Per\Lambda
\times \{x\}$.
\end{proposition}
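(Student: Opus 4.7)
The inclusion $\{x\} \times \Per\Lambda \times \{x\} \subseteq \G^x_x$ holds pointwise for every $x \in \Lambda^\infty$ directly from the definition of $\Per\Lambda$: if $g = m - n$ with $\sigma^m(y) = \sigma^n(y)$ for all $y$, then specialising $y = x$ witnesses $(x, g, x) \in \G$. The substance of the proposition is the reverse containment on a $\mueq$-conull set. Since $\Z^k$ is countable, it suffices to show that for each $g \in \Z^k \setminus \Per\Lambda$, the set
$$
E_g := \{x \in \Lambda^\infty \mid (x, g, x) \in \G\} = \bigcup_{l \in \N^k} F_l,
$$
with $g = g^+ - g^-$ the coordinatewise decomposition satisfying $g^+ \wedge g^- = 0$ and $F_l := \{x : \sigma^{g^+ + l}(x) = \sigma^{g^- + l}(x)\}$, is $\mueq$-null. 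The family $(F_l)_l$ is nondecreasing, and each $F_l$ is closed in $\Lambda^\infty$ by continuity of the shifts and Hausdorffness.

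My strategy is a zero-one dichotomy for $\mueq(E_g)$ combined with exclusion of the value $1$ when $g \notin \Per\Lambda$. The shift tail equivalence relation on $\Lambda^\infty$, given by $x \sim y$ if and only if $\sigma^N(x) = \sigma^N(y)$ for some $N \in \N^k$, is ergodic with respect to $\mueq$ as a consequence of strong connectivity. Moreover $E_g$ is invariant under this relation: if $x \in F_l$ and $\sigma^N(x) = \sigma^N(y)$, then for any $l' \in \N^k$ with $l' \geq l$ and $l' \geq N$ coordinatewise,
$$
\sigma^{g^\pm + l'}(y) = \sigma^{g^\pm + l' - N}(\sigma^N(y)) = \sigma^{g^\pm + l' - N}(\sigma^N(x)) = \sigma^{g^\pm + l'}(x),
$$
and the right-hand sides agree because $x \in F_{l'}$. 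Hence $\mueq(E_g) \in \{0, 1\}$.

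To rule out $\mueq(E_g) = 1$ when $g \notin \Per\Lambda$, the quantitative input is the identity
$$
\mueq(F_l) = \rho(\Lambda)^{-l} \sum_{v \in \Lambda^0} |\Lambda^l v| \, \mueq(F_0 \cap Z(v)),
$$
obtained by partitioning $F_l = (\sigma^l)^{-1}(F_0)$ according to the initial degree-$l$ segment and using the cylinder formula $\mueq(Z(\mu\nu)) = \rho(\Lambda)^{-l}\mueq(Z(s(\mu)\nu))$ for $\mu \in \Lambda^l$. With $w_v^{(l)} := \rho(\Lambda)^{-l} |\Lambda^l v| \, \mueq(Z(v))$ one has $\sum_v w_v^{(l)} = 1$, and $\mueq(F_l)$ is the $(w_v^{(l)})$-weighted average of the ratios $\mueq(F_0 \cap Z(v))/\mueq(Z(v)) \in [0, 1]$. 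Perron--Frobenius asymptotics for the commuting family $\{A_i\}$ keep the weights $w_v^{(l)}$ uniformly bounded below along a cofinal subset of $l \in \N^k$, so $\mueq(F_l) \to 1$ forces $\mueq(F_0 \cap Z(v)) = \mueq(Z(v))$ for every $v$. Since $\mueq$ has full topological support and $F_0$ is closed, $F_0$ must then equal $\Lambda^\infty$, contradicting $g \notin \Per\Lambda$.

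The main obstacle is the Perron--Frobenius step: although the $A_i$ commute, they may have nontrivial periods, so the rank-one convergence $\rho(\Lambda)^{-l} A^l \to m \ell^T$ only holds along suitable subsequences, and one must track the weights $w_v^{(l)}$ along a cofinal set of $l$ to ensure no vertex contribution is lost. Once this bookkeeping is handled, the conclusion follows by taking the union of the $\mueq$-null exceptional sets over the countable collection $\Z^k \setminus \Per\Lambda$.
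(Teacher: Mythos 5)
The paper offers no proof of this statement---it is quoted verbatim from \cite[Lemma~12.1]{AHLRS2}---so your argument can only be judged on its own terms. Your quantitative step is correct, and in fact simpler than you suggest: if $\xi$ denotes the strictly positive left Perron--Frobenius eigenvector of the commuting family $(A_i)$, then $\sum_u A^l(u,v)\ge\rho(\Lambda)^l\xi_v/\max_u\xi_u$, so your weights $w_v^{(l)}$ are bounded below uniformly over \emph{all} $l\in\N^k$ and no Perron--Frobenius asymptotics or cofinal bookkeeping is needed. The deduction that $\mueq(F_0\cap Z(v))=\mueq(Z(v))$ for all $v$ forces $F_0=\Lambda^\infty$ (full support of $\mueq$ plus closedness of $F_0$), hence $g\in\Per\Lambda$, is also sound.

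The genuine gap is the zero--one law. The shift-tail relation $x\sim y\Leftrightarrow\sigma^N(x)=\sigma^N(y)$ is exactly the relation $\RR^\gamma$ of Lemma~\ref{lem:Rgamma}, and it is \emph{not} ergodic for $\mueq$ in general: Corollary~\ref{cergodic} identifies its ergodic components as the sets $X_\omega$ indexed by $\omega\in\lms$, of which there are $|\Z^k/\Pp_\Lambda|>1$ whenever $\Lambda$ is not primitive. Already for a strongly connected bipartite $1$-graph the two sets of infinite paths emanating from the two sides of the bipartition are disjoint tail-invariant sets of measure $1/2$. Since the rest of your argument only yields the implication $\mueq(E_g)=1\Rightarrow g\in\Per\Lambda$, without a valid dichotomy you obtain merely $\mueq(E_g)<1$ for $g\notin\Per\Lambda$, which does not prove the proposition. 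To repair the step you need a relation that is both ergodic and leaves $E_g$ invariant. The full orbit relation $\RR$ does leave $E_g$ invariant (conjugate $(x,g,x)$ by any $(y,h,x)\in\G$), but you cannot import its ergodicity from Proposition~\ref{pmainp}(\ref{it:Rfactor}) without circularity, since that proposition relies on the very statement you are proving; alternatively, one can combine the ergodic decomposition of $\RR^\gamma$ into the $X_\omega$ with the fact that every $\RR$-class meets every $X_\omega$ (take $\lambda\in v\Lambda r(y)$ with $v\in\omega$, so $\lambda y\in X_\omega$ is $\RR$-equivalent to $y$). Either route requires a substantive additional argument that is currently missing.
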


Note that by Remark~\ref{rRN}, quasi-invariance of $\mueq$ immediately gives $\G^x_x
\subseteq \{(x,g,x) \mid \rho(\Lambda)^g = 1\}$ for $\meae$~$x$. But
Proposition~\ref{pAHLRS} says much more for $k \ge 2$, since the inclusion
\begin{equation} \label{eper}
\Per\Lambda\subseteq\{m-n\mid m,n\in\N^k, A^m=A^n\},
\end{equation}
proved in \cite[Remark 7.2]{AHLRS2}, shows that $\Per\Lambda$ is generally an infinite
index subgroup of $\{g \mid \rho(\Lambda)^g = 1\}$.

Consider the orbit equivalence relation $\RR$ on $\Lambda^\infty$ defined by $\G$, so
$$
x\sim_\RR y\text{ if and only if } \sigma^m(x)=\sigma^n(y)\text{ for some } m,n\in\N^k.
$$
By Lemma~\ref{lRN}, the measure $\mueq$ is quasi-invariant with respect to $\RR$, and the
corresponding Radon--Nikodym cocycle $D$ can be described as follows. There is an
$\RR$-invariant $\mueq$-conull set $X \subseteq \Lambda^\infty$ such that for all $x, y
\in X$ and $m,n \in \N^k$ satisfying $\sigma^m(x) = \sigma^n(y)$, we have
\begin{equation}\label{RadNik}
D(x,y) = e^{-c(x,m-n,y)} = \rho(\Lambda)^{n-m}.
\end{equation}

From now on we view $\RR$ as a measurable equivalence relation on
$(\Lambda^\infty,\mueq)$. We write $\nueq$ for the left counting measure on $\RR$
obtained from $\mueq$ as in Section~\ref{sec:S-general}, and write $\pi : \C[\RR] \to
\Bb(L^2(\RR, d\nueq))$ for the representation~\eqref{eq:pirep}, so that $W^*(\RR) =
\pi(\C[\RR])''$.

The following result underpins our computation of the factor type of the extremal
$\alpha$-KMS$_1$ states described in~\eqref{defphichi}. The idea for the isomorphism in
part~(\ref{it:vNA iso}) comes from \cite[Remark~2.5]{N}.

\begin{proposition} \label{pmainp}
Let $\Lambda$ be a strongly connected finite $k$-graph and $\G$ be the associated
groupoid. Let~$\chi$ be a character of $\Per\Lambda$,  let $\varphi_\chi$ be the extremal
KMS state of $C^*(\Lambda)$ defined in~\eqref{defphichi}, and let $\tilde\chi$ be a
character of $\Z^k$ extending $\chi$. Then
\begin{enumerate}
\item \label{it:homomorphism}  there is a $*$-homomorphism $\Phi_{\tilde\chi}\colon
    C_c(\G)\to \C[\RR]$ such that
$$
\Phi_{\tilde\chi}(f)(x,y)=\sum_{(x,g,y) \in \G}\tilde\chi(g)f(x,g,y)\quad\text{for }f\in C_c(\G);
$$
\item\label{it:vNA iso} the map $\pi_{\varphi_\chi}(f) \mapsto
    \pi(\Phi_{\tilde\chi}(f))$ for $f\in C_c(\G)$ extends uniquely to a von Neumann
    algebra isomorphism of $\pi_{\varphi_\chi}(C^*(\G))'' $ onto $W^*(\RR)$;
\item\label{it:Rfactor} $W^*(\RR)$ is a  factor and the equivalence relation $\RR$ on
    $(\Lambda^\infty,\mueq)$ is ergodic.
\end{enumerate}
\end{proposition}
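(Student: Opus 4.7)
The plan is to address the three parts in sequence. For (\ref{it:homomorphism}), I would verify that $\Phi_{\tilde\chi}$ is well-defined and a $*$-homomorphism essentially by computation on the generating indicators $\unb_{Z(\mu,\nu)}$. Since $f\in C_c(\G)$ is a finite linear combination supported on compact open bisections $Z(\mu_i,\nu_i)$, the sum defining $\Phi_{\tilde\chi}(f)(x,y)$ is finite for each $(x,y)\in\RR$, and the output is supported on the finite union of Borel bisections $R(\mu,\nu) := \{(\mu x, \nu x) \mid r(x)=s(\mu)=s(\nu)\}$ of $\RR$, so lands in $\C[\RR]$. In particular $\Phi_{\tilde\chi}(\unb_{Z(\mu,\nu)}) = \tilde\chi(d(\mu)-d(\nu))\,\unb_{R(\mu,\nu)}$, and compatibility with convolution and involution then reduces to the character identities $\tilde\chi(g+h)=\tilde\chi(g)\tilde\chi(h)$ and $\tilde\chi(-g)=\overline{\tilde\chi(g)}$.

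For (\ref{it:vNA iso}), the crucial calculation will be
\[
\varphi\bigl(\Phi_{\tilde\chi}(f)\bigr) = \int_{\Lambda^\infty}\sum_{(x,g,x)\in\G}\tilde\chi(g)\,f(x,g,x)\,d\mueq(x) = \varphi_\chi(f),
\]
where the second equality uses Proposition~\ref{pAHLRS}: for $\mueq$-a.e.~$x$ the isotropy group is $\Per\Lambda$, on which $\tilde\chi$ coincides with $\chi$, so the inner sum reduces to that in~\eqref{defphichi}. Letting $\xi_\chi$ and $\eta := \unb_{\{(x,x)\mid x\in\Lambda^\infty\}}$ denote the canonical cyclic vectors for $\Hh_{\varphi_\chi}$ and $L^2(\RR,d\nueq)$ respectively, this state identity lets me define an isometry $U\colon\pi_{\varphi_\chi}(f)\xi_\chi\mapsto\pi(\Phi_{\tilde\chi}(f))\eta$ intertwining the two representations. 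To establish surjectivity of $U$, I would show that the linear span of $\{\pi(\unb_{R(\mu,\nu)})\eta\}$ is dense in $L^2(\RR,d\nueq)$, relying on the fact that every element of $\RR$ lies in some $R(\mu,\nu)$, so that the $R(\mu,\nu)$ form a countable Borel cover of $\RR$ by bisections. Conjugation by $U$ will then extend to the desired von Neumann algebra isomorphism of double commutants, and inspection on generators confirms that it is the asserted map.

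For (\ref{it:Rfactor}), I would appeal to extremality of $\varphi_\chi$ among $\alpha$-KMS$_1$ states, which via \cite[Theorem~5.3.30(3)]{BR1997a} makes $\pi_{\varphi_\chi}(C^*(\G))''$ a factor, and this transfers to $W^*(\RR)$ through the isomorphism of (\ref{it:vNA iso}). Ergodicity of $\RR$ then follows from the standard fact that the centre of $W^*(\RR)$ corresponds, under the diagonal embedding $L^\infty(\Lambda^\infty,\mueq)\hookrightarrow W^*(\RR)$, to the $\RR$-invariant $L^\infty$-functions, so the factor condition forces every $\RR$-invariant Borel set to be $\mueq$-null or conull. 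The main obstacle I anticipate is the surjectivity step for $U$: one must be careful because the Radon--Nikodym cocycle $D$ enters the representation $\pi$ explicitly and need not be bounded, so the bookkeeping that compares the topology of $\G$ with the measurable structure of $\RR$ will require some attention.
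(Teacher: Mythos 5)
Your proposal is correct and follows essentially the same route as the paper: part~(\ref{it:homomorphism}) by direct verification on the generating bisections, part~(\ref{it:vNA iso}) by establishing $\varphi\circ\Phi_{\tilde\chi}=\varphi_\chi$ via Proposition~\ref{pAHLRS} and then identifying $\pi\circ\Phi_{\tilde\chi}$ with the GNS representation of $\varphi_\chi$, and part~(\ref{it:Rfactor}) from extremality together with \cite[Theorem~5.3.30(3)]{BR1997a} and \cite[Proposition~2.9(2)]{FM2}. The density step you flag is handled just as you anticipate (the sets $R(\mu,\nu)$ cover $\RR$ and $D$ is constant on each, so the unboundedness of $D$ causes no difficulty), and the paper simply asserts the corresponding strong-operator density.
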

\bp%
Part~(\ref{it:homomorphism}): Since $f \in C_c(\G)$, the formula for
$\Phi_{\tilde\chi}(f)$ has only finitely many nonzero terms, so the series converges, and
the function $\Phi_{\tilde\chi}(f)$ is supported on finitely many Borel bisections of
$\RR$. It is clear that $\Phi_{\tilde\chi}(f)$ is Borel. Thus $\Phi_{\tilde\chi}(f)$
belongs to~$\C[\RR]$. Direct calculation shows that $\Phi_{\tilde\chi}$ is a
$*$-homomorphism.

Part~(\ref{it:vNA iso}): Proposition~\ref{pAHLRS} implies that for $f \in C_c(\G)$, we
have
$$
\Phi_{\tilde\chi}(f)(x,x)=\sum_{g\in\Per\Lambda}\chi(g)f(x,g,x)
    \quad\text{for $\meae$ $x \in \Lambda^\infty$.}
$$
Hence the state $\varphi$ on $W^*(\RR)$ defined by~\eqref{eq:varphistate} satisfies
$\varphi\circ\Phi_{\tilde\chi}=\varphi_\chi$. Since $\Phi_{\tilde\chi}(C_c(\G))$ is
strong-operator dense in $W^*(\RR)$, we can identify $\pi\circ\Phi_{\tilde\chi}$ with the
GNS-representation of $\varphi_\chi$. Hence $\pi_{\varphi_\chi}(C^*(\G))''\cong
W^*(\RR)$.

Part~(\ref{it:Rfactor}): Since $\varphi_\chi$ is an extremal KMS state,
\cite[Theorem~5.3.30(3)]{BR1997a} implies that $W^*(\RR) \cong
\pi_{\varphi_\chi}(C^*(\Lambda))''$ is a factor. Proposition 2.9(2) of \cite{FM2} implies
that $\RR$ is ergodic.%
\ep

Equation~\eqref{RadNik} shows that the finer equivalence relation $\RR^D$ of
Proposition~\ref{prp:centralizer} for $\QQ = \RR$ and $\mu = \mueq$ is given, up to a set
of measure $0$, by
\begin{equation}\label{eq:RD}
x\sim_{\RR^D}y\text{ if and only if }\sigma^m(x)=\sigma^n(y)
    \text{ for some } m,n\in\N^k\text{ with } \rho(\Lambda)^{m-n} = 1.
\end{equation}
Corollary~\ref{ceqConnes} leads us to analyse this equivalence relation $\RR^D$. To do
this, it will help to consider an even finer equivalence relation.

Recall that $C^*(\Lambda)$ carries a canonical action $\gamma$ of $\T^k$, and that
$C^*(\Lambda)^\gamma$ denotes the fixed-point algebra for $\gamma$ (see \cite{KP}).

\begin{lemma}\label{lem:Rgamma}
Let $\Lambda$ be a strongly-connected finite $k$-graph and $\G$ be the associated
groupoid. Let $\RR^\gamma$ denote the relation
$$
x\sim_{\RR^\gamma} y\text{ if and only if }
    \sigma^n(x)=\sigma^n(y)\text{ for some } n\in\N^k,
$$
regarded as a subgroupoid of the measurable equivalence relation $\RR^D$. Then, using the
isomorphism $(x,y) \mapsto (x,0,y)$ of $\RR^\gamma$ onto the kernel $\G_0$ of the
canonical $\Z^k$-valued cocycle $(x,m,y) \mapsto m$ on $\G$, we can consider $\RR^\gamma$
as an \'etale topological equivalence relation, and $C^*(\RR^\gamma) \cong
C^*(\Lambda)^\gamma$.
\end{lemma}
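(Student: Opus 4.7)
The plan is to verify the three assertions in order: the containment $\RR^\gamma \subseteq \RR^D$, the fact that $\G_0$ is étale, and finally the $C^*$-algebra isomorphism.

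First, the inclusion $\RR^\gamma \subseteq \RR^D$ is immediate from the definitions: if $\sigma^n(x) = \sigma^n(y)$, take $m = n$ in~\eqref{eq:RD}, so that $\rho(\Lambda)^{m-n} = 1$. Next, the map $\Psi\colon (x,y) \mapsto (x,0,y)$ is clearly injective; its image is precisely $\G_0 = c_0^{-1}(\{0\})$, where $c_0\colon (x,m,y) \mapsto m$ is the canonical $\Z^k$-valued cocycle, because an element $(x,0,y) \in \G$ satisfies $\sigma^n(x) = \sigma^n(y)$ for some $n \in \N^k$ by the very definition of $\G$. Since composition in $\G$ adds the cocycle values, $\G_0$ is closed under inversion and composition, and $\Psi$ is a groupoid isomorphism.

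The cocycle $c_0$ is continuous and takes the constant value $d(\mu) - d(\nu)$ on each basic open set $Z(\mu,\nu)$; since $\Z^k$ is discrete, $\G_0$ is a clopen subgroupoid of $\G$. In particular the restrictions of $r$ and $s$ to $\G_0$ are still local homeomorphisms, so $\G_0$ is itself an étale groupoid, and via $\Psi$ it is an étale topological equivalence relation on $\Lambda^\infty$.

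For the final claim, transport the gauge action $\gamma$ to $C^*(\G)$ via the isomorphism $C^*(\Lambda) \cong C^*(\G)$: on $C_c(\G)$ it takes the form $\gamma_z(f)(x,m,y) = z^m f(x,m,y)$, so the associated faithful conditional expectation $E\colon C^*(\G) \to C^*(\G)^\gamma$ obtained by averaging over $\T^k$ acts on $C_c(\G)$ by $E(f) = f \cdot \unb_{\G_0}$, i.e.\ restriction to $\G_0$ (extended by zero). Since $\G_0$ is clopen, extension by zero is a $*$-homomorphism $C_c(\G_0) \hookrightarrow C_c(\G)$ that is isometric for the reduced norms, and faithfulness of $E$ together with density of $C_c(\G)$ in $C^*(\G)$ forces $C_c(\G_0)$ to be dense in $C^*(\G)^\gamma$. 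Thus this embedding extends to an isomorphism $C^*(\G_0) \cong C^*(\G)^\gamma \cong C^*(\Lambda)^\gamma$, as required.

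The only subtle step is the last one: one must check that the $C^*$-norm inherited by $C_c(\G_0)$ from its embedding into $C^*(\G)$ coincides with the full groupoid $C^*$-norm on $C^*(\G_0)$. This is standard for clopen subgroupoids of étale groupoids (representations of $C_c(\G_0)$ extend to representations of $C_c(\G)$ by inducing up), but it is the one place where something more than bookkeeping is needed; everything else reduces to unpacking the definitions of $\RR^\gamma$, $\G_0$ and the gauge action.
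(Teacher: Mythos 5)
Your proof is correct and follows essentially the same route as the paper's: identify $\RR^\gamma$ with the clopen kernel $\G_0$ of the canonical $\Z^k$-valued cocycle, transport the gauge action to $\beta_z(f)(x,g,y)=z^g f(x,g,y)$ on $C_c(\G)$, and identify the fixed-point algebra with the completion of the functions supported on $\G_0$. You supply more detail than the paper --- the conditional-expectation argument for density of $C_c(\G_0)$ in $C^*(\G)^\gamma$ and the coincidence of the subgroupoid $C^*$-norm with the subspace norm, both of which the paper leaves implicit by simply declaring $C^*(\Lambda)^\gamma$ to be ``the completion of the functions on $\G$ supported on $\G_0$'' --- but the substance is the same.
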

\bp%
The isomorphism $(x,y) \mapsto (x,0,y)$ of $\RR^\gamma$ onto $\G_0$ is Borel, so it can
be used to define on~$\RR^\gamma$ the structure of an \'etale groupoid. The isomorphism
$C^*(\Lambda) \cong C^*(\G)$ carries the gauge action to the action given by
$\beta_z(f)(x,g,y) = z^g f(x,g,y)$ for $f \in C_c(\G)$ (see the proof of
\cite[Corollary~3.5(i)]{KP}). So it carries the fixed-point algebra
$C^*(\Lambda)^\gamma$ to the completion of the functions on $\G$ supported on~$\G_0$. Hence $C^*(\RR^\gamma) \cong C^*(\Lambda)^\gamma$.%
\ep

Let $\chi$ be a character of $\Per\Lambda$, and let $\Phi_{\tilde\chi}$ be as in
Proposition~\ref{pmainp}(\ref{it:homomorphism}). Lemma~\ref{lem:Rgamma} implies that
$W^*(\RR^\gamma)\subseteq W^*(\RR)$ is the von Neumann algebra generated by the image
under (the extension of) $\pi\circ \Phi_{\tilde\chi}$ of $C^*(\Lambda)^\gamma$. Note in
passing that the gauge action need not pass from $C^*(\Lambda)$ to~$W^*(\RR)$; it extends
precisely when $\Per\Lambda=0$.

\section{Frobenius analysis of strongly connected higher-rank graphs}\label{sec:PF}

In this section we analyse strongly connected finite $k$-graphs that are not necessarily
primitive to obtain a further refinement of the Perron--Frobenius theory for them
developed in \cite[Lemma~4.1]{KP2} and \cite[Corollary~4.2]{AHLRS2}. Generalisations of
Perron--Frobenius theory have been studied by a number of authors (see for example
\cite{GT} and the references therein), so part of the content of this section may be
known to experts. We give a self-contained presentation since we could not find exactly
what we need in the literature.

\begin{lemma}\label{lem:Pp} Let $\Lambda$ be a strongly connected $k$-graph. For
each vertex $v\in \Lambda^0$ let $\Pp_v^+ := d(v\Lambda v) \subseteq \N^k$. Then
$\Pp_v^+$ is a subsemigroup of $\N^k$ and $\Pp_v^+ - \Pp_v^+$ a subgroup of $\Z^k$. Let
\[
\Pp_\Lambda := \{d(\mu) - d(\nu) \mid \mu,\nu\text{ are cycles in $\Lambda$}\}.
\]
Then $\Pp_\Lambda = \Pp_v^+ - \Pp_v^+$ for every $v \in \Lambda^0$. We call $\Pp_\Lambda$
the \emph{group of periods} of $\Lambda$.
\end{lemma}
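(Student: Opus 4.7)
The plan is to verify the three assertions of the lemma in sequence: that $\Pp_v^+$ is a subsemigroup of $\N^k$, that $\Pp_v^+-\Pp_v^+$ is a subgroup of $\Z^k$, and finally that this subgroup coincides with $\Pp_\Lambda$ for every $v$.

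The first two assertions are essentially formal. For the semigroup claim, if $\mu,\nu\in v\Lambda v$ then $s(\mu)=v=r(\nu)$, so the composition $\mu\nu\in v\Lambda v$ is defined and has degree $d(\mu)+d(\nu)$; together with $0=d(v)\in\Pp_v^+$, this shows $\Pp_v^+$ is a submonoid of $\N^k$. The subgroup assertion is then the standard observation that the difference set of any additive submonoid of $\Z^k$ is a subgroup: closure under addition follows from $(a-b)+(c-d)=(a+c)-(b+d)$ using that $\Pp_v^+$ is closed under addition, closure under inversion from $-(a-b)=b-a$, and $0\in\Pp_v^+-\Pp_v^+$ is trivial. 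No obstacle arises here.

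The substantive content is the identification $\Pp_\Lambda=\Pp_v^+-\Pp_v^+$. The inclusion $\supseteq$ is immediate: any $a,b\in\Pp_v^+$ are realised as degrees of cycles at $v$, so $a-b\in\Pp_\Lambda$ by definition. For $\subseteq$, I would use strong connectedness to ``transport'' a cycle based at an arbitrary vertex to one based at $v$. Specifically, given a cycle $\mu\in w\Lambda w$, pick $\alpha\in v\Lambda w$ and $\beta\in w\Lambda v$; then both $\alpha\mu\beta$ and $\alpha\beta$ lie in $v\Lambda v$, so
\[
d(\mu)=d(\alpha\mu\beta)-d(\alpha\beta)\in\Pp_v^+-\Pp_v^+.
\]
Applying this separately to a cycle $\mu\in w\Lambda w$ and a cycle $\nu\in u\Lambda u$ shows that $d(\mu)$ and $d(\nu)$ each lie in $\Pp_v^+-\Pp_v^+$; since this set is already known to be a subgroup, the difference $d(\mu)-d(\nu)$ lies in it as well.

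The main obstacle, such as it is, lies in getting the order of the arguments right: one must have the subgroup property of $\Pp_v^+-\Pp_v^+$ in hand before combining two ``transported'' cycles, otherwise one is tempted to concoct a single cycle at $v$ realising $d(\mu)-d(\nu)$, which is in general impossible (indeed $d(\mu)-d(\nu)$ need not even lie in $\N^k$). Once the steps are carried out in the order above, the argument reduces to a few lines of bookkeeping, and works uniformly for every $v\in\Lambda^0$, justifying the notation $\Pp_\Lambda$ as independent of the base vertex.
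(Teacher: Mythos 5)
Your proof is correct and follows essentially the same route as the paper: both arguments use strong connectedness to move cycles to the base vertex $v$ and the formal subgroup structure of $\Pp_v^+-\Pp_v^+$. The only cosmetic difference is that you transport the two cycles $\mu$ and $\nu$ separately and then invoke the group property, whereas the paper exhibits a single pair of cycles $\eta\lambda\mu\lambda'\eta'$ and $\eta\nu\lambda\lambda'\eta'$ in $v\Lambda v$ whose degrees differ by exactly $d(\mu)-d(\nu)$; both are equally valid.
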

\begin{proof}
Fix $v \in \Lambda^0$. The set $\Pp_v^+$ is a semigroup because $d$ carries composition
to addition, and then $\Pp_v^+ - \Pp_v^+$ is obviously a subgroup of $\Z^k$. We clearly
have $\Pp_v^+ - \Pp_v^+ \subseteq \Pp_\Lambda$ for each $v$. For the reverse inclusion,
fix cycles $\mu,\nu \in \Lambda$ and $v \in \Lambda^0$. Since $\Lambda$ is strongly
connected, there exist $\lambda \in r(\nu)\Lambda r(\mu)$, $\lambda' \in r(\mu)\Lambda
r(\nu)$, $\eta \in v \Lambda r(\nu)$ and $\eta' \in r(\nu) \Lambda v$. Hence
\[
d(\mu) - d(\nu)
    = d(\eta\lambda\mu\lambda'\eta') - d(\eta\nu\lambda\lambda'\eta')
        \in \Pp_v^+ - \Pp_v^+.\qedhere
\]
\end{proof}

When $k=1$, the group $\Pp_\Lambda$ is the subgroup of $\Z$ generated by the classical
period of the directed graph $(\Lambda^0, \Lambda^1, r, s)$ (see, for example,
\cite{S,Kitchens}).

\begin{remark}\label{rmk:Pp dependence}
Since $|v\Lambda^n w| = A^n(v,w) = \big(\prod^k_{i=1} A_i^{n_i}\big)(v,w)$ for all
$v,w,n$, the group $\Pp_\Lambda$ depends only on the connectivity matrices of $\Lambda$
and is independent of the factorisation property. By contrast, for $k\ge2$ the group
$\Per\Lambda$ is not determined by the $A_i$ alone (see \cite{DY}).
\end{remark}

We now establish a number of properties of $\Pp_\Lambda$ that we will need in order to
compute the types of the KMS states of $C^*(\Lambda)$.

\begin{lemma}\label{twopaths}
Let $\Lambda$ be a strongly connected $k$-graph.  Then $d(\lambda) - d(\mu) \in
\Pp_\Lambda$ whenever $\lambda,\mu \in \Lambda$ satisfy $r(\lambda) = r(\mu)$ and
$s(\lambda) = s(\mu)$. In particular, there is a function $C : \Lambda^0 \times \Lambda^0
\to \Z^k/\Pp_\Lambda$ such that $C(r(\lambda), s(\lambda)) = d(\lambda) + \Pp_\Lambda$
for all $\lambda \in \Lambda$. For $u,v,w \in \Lambda^0$, we have
$$
C(u,v) + C(v,w) = C(u,w),\quad C(u,u) = 0,\quad\text{ and }\quad C(u,v) = -C(v,u).
$$
In particular, there is an equivalence relation $\sim$ on $\Lambda^0$ such that $v \sim
w$ if and only if $C(v,w) = 0$.
\end{lemma}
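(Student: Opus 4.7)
The plan is to reduce everything to the defining description of $\Pp_\Lambda$ in Lemma~\ref{lem:Pp}, by using strong connectedness to turn pairs of parallel paths into pairs of cycles based at a common vertex.

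First I would prove the main claim: given $\lambda,\mu$ with $r(\lambda)=r(\mu)=:u$ and $s(\lambda)=s(\mu)=:w$, use strong connectedness to pick some $\eta\in w\Lambda u$. Then $\lambda\eta$ and $\mu\eta$ are cycles at $u$, so both $d(\lambda\eta)$ and $d(\mu\eta)$ lie in $\Pp_u^+$, and
\[
d(\lambda)-d(\mu) = d(\lambda\eta) - d(\mu\eta) \in \Pp_u^+ - \Pp_u^+ = \Pp_\Lambda,
\]
where the last equality is Lemma~\ref{lem:Pp}. This is the essential step; everything else is bookkeeping.

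Next I would define $C$. For arbitrary $v,w\in\Lambda^0$, strong connectedness gives some $\lambda\in v\Lambda w$, and the claim just proved says that $d(\lambda)+\Pp_\Lambda$ is independent of the choice of~$\lambda$. Setting $C(v,w):=d(\lambda)+\Pp_\Lambda$ thus gives a well-defined function $\Lambda^0\times\Lambda^0\to\Z^k/\Pp_\Lambda$ that automatically satisfies the defining identity $C(r(\lambda),s(\lambda))=d(\lambda)+\Pp_\Lambda$ for every $\lambda\in\Lambda$.

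For the algebraic identities, the cocycle property $C(u,v)+C(v,w)=C(u,w)$ follows by picking representative paths $\lambda\in u\Lambda v$ and $\mu\in v\Lambda w$ (possible by strong connectedness) and using $d(\lambda\mu)=d(\lambda)+d(\mu)$. The identity $C(u,u)=0$ comes from choosing the identity morphism at~$u$ as the representative, which has degree~$0$. Then $C(u,v)=-C(v,u)$ follows by combining these: $C(u,v)+C(v,u)=C(u,u)=0$.

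Finally, the relation $\sim$ defined by $v\sim w \iff C(v,w)=0$ is reflexive by $C(v,v)=0$, symmetric by $C(v,w)=-C(w,v)$, and transitive by $C(u,w)=C(u,v)+C(v,w)$; so it is an equivalence relation on $\Lambda^0$. There is no real obstacle here — the only nontrivial ingredient is the initial use of a connecting path $\eta$ to convert parallel paths into cycles, which is where strong connectedness enters decisively.
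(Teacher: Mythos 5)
Your proof is correct and follows essentially the same route as the paper: append a connecting path to turn the two parallel paths into cycles at a common vertex, then invoke Lemma~\ref{lem:Pp}, with the algebraic identities for $C$ following by the same bookkeeping. The only (immaterial) difference is that you obtain $C(u,u)=0$ by taking the identity morphism as representative, whereas the paper deduces it from $C(u,u)=C(u,u)+C(u,u)$.
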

\begin{proof}
Fix $\lambda,\mu$ with $r(\lambda) = r(\mu)$ and $s(\lambda) = s(\mu)$. Choose $\nu \in
s(\lambda)\Lambda r(\lambda)$, so $\mu\nu$ and $\lambda\nu$ are cycles. Then $d(\lambda)
- d(\mu) = d(\lambda\nu) - d(\mu\nu) \in \Pp_\Lambda$. Since $v\Lambda w$ is nonempty for
all $v,w\in\Lambda^0$, it follows that there is a well-defined function $C : \Lambda^0
\times \Lambda^0 \to \Z^k/\Pp_\Lambda$ satisfying $C(r(\lambda), s(\lambda)) = d(\lambda)
+ \Pp_\Lambda$ for all $\lambda \in \Lambda$.

Choose $\mu \in u \Lambda v$ and $\nu \in v\Lambda w$, and note that
$$
C(u,w) = d(\mu\nu) + \Pp_\Lambda = d(\mu) + d(\nu) + \Pp_\Lambda = C(u,v) + C(v,w).
$$
Now $C(u,u) = C(u,u) + C(u,u)$ forces $C(u,u) = 0$, and then $0 = C(u,u) = C(u,v) +
C(v,u)$ forces $C(u,v) = -C(v,u)$ for all $u$ and $v$. The last statement follows.
\end{proof}

\begin{corollary}\label{cor:C<->gpd}
Let $\Lambda$ be a strongly connected finite $k$-graph, and $\G$ be the associated
groupoid. For $\lambda \in \Lambda$ and $w \in \Lambda^0$, we have $C(r(\lambda), w) =
d(\lambda) + C(s(\lambda), w)$, and for $(x,g,y) \in \G$ we have $C(r(x), r(y)) = g +
\Pp_\Lambda$.
\end{corollary}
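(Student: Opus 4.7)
The plan is to derive both assertions as short consequences of the additivity and antisymmetry of $C$ established in Lemma~\ref{twopaths}.

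For the first assertion, I would apply the identity $C(u,w) = C(u,v) + C(v,w)$ from Lemma~\ref{twopaths} with $u = r(\lambda)$ and $v = s(\lambda)$, giving
\[
C(r(\lambda),w) = C(r(\lambda),s(\lambda)) + C(s(\lambda),w).
\]
Since $\lambda$ itself witnesses that $C(r(\lambda),s(\lambda)) = d(\lambda) + \Pp_\Lambda$ by the defining property of $C$ (Lemma~\ref{twopaths}), substituting yields $C(r(\lambda),w) = d(\lambda) + C(s(\lambda),w)$, as required.

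For the groupoid statement, I would unpack the definition of $\G$. Given $(x,g,y)\in\G$, choose $n\in\N^k$ large enough that $g+n\in\N^k$ and $\sigma^{g+n}(x) = \sigma^n(y)$, and set $m := g+n$ and $v := \sigma^m(x)(0,0) \in \Lambda^0$. The truncations $\lambda := x(0,m)$ and $\mu := y(0,n)$ are then finite paths in $r(x)\Lambda v$ and $r(y)\Lambda v$ respectively, with $d(\lambda) = m$ and $d(\mu) = n$. By the defining property of $C$, $C(r(x),v) = m + \Pp_\Lambda$ and $C(r(y),v) = n + \Pp_\Lambda$. Using the additivity and antisymmetry of $C$ from Lemma~\ref{twopaths},
\[
C(r(x),r(y)) = C(r(x),v) + C(v,r(y)) = C(r(x),v) - C(r(y),v) = (m - n) + \Pp_\Lambda = g + \Pp_\Lambda.
\]

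There is no real obstacle here; the argument is essentially a direct unpacking of definitions. The only point that requires a moment's thought is that the element $n \in \N^k$ in the definition of $\G$ must be chosen so that $g+n$ is coordinatewise nonnegative (otherwise $\sigma^{g+n}(x)$ would be undefined), but this is guaranteed by the existence quantifier in the definition, so replacing $n$ by $n+n'$ for a suitable $n' \in \N^k$ if necessary causes no trouble.
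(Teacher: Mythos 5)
Your proof is correct and follows essentially the same route as the paper: both assertions are obtained by combining the defining property $C(r(\lambda),s(\lambda)) = d(\lambda)+\Pp_\Lambda$ with the cocycle identities of Lemma~\ref{twopaths}, and the groupoid statement is handled by truncating $x$ and $y$ to finite paths with a common source. (Your version is in fact slightly cleaner than the paper's, which writes $\nu = x(0,n)$ where it means $y(0,n)$.)
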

\bp%
The first assertion follows from
$C(r(\lambda),w)-C(s(\lambda),w)=C(r(\lambda),s(\lambda))$ and the definition of~$C$. For
the second, take $(x,g,y) \in \G$, and pick $m,n \in \N^k$ with $m-n = g$ and
$\sigma^m(x) = \sigma^n(y)$. Put $\mu = x(0,m)$ and $\nu = x(0,n)$. Then $s(\mu) =
r(\sigma^m(x)) = r(\sigma^n(x)) = s(\nu)$. Hence $C(r(x), r(y)) = C(r(\mu), s(\mu)) -
C(r(\nu), s(\nu)) = m + \Pp_\Lambda - n + \Pp_\Lambda = g + \Pp_\Lambda$.
\ep%

The following is the main technical result we will need later.

\begin{proposition} \label{properiod}
Let $\Lambda$ be a strongly connected finite $k$-graph. There exists $p\in \Pp_\Lambda
\cap (\N\setminus\{0\})^k$ such that for all $v,w \in \Lambda^0$, we have $v\sim w$ if
and only if $v \Lambda^p w \not= \emptyset$.
\end{proposition}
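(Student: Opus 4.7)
First, the direction ``$v\Lambda^p w\neq\emptyset \Rightarrow v\sim w$'' is immediate for any $p\in\Pp_\Lambda$: if $\lambda\in v\Lambda^p w$ then \lemref{twopaths} gives $C(v,w)=d(\lambda)+\Pp_\Lambda=p+\Pp_\Lambda=0$, so $v\sim w$. The task is therefore to produce a single $p\in\Pp_\Lambda\cap(\N\setminus\{0\})^k$ such that $v\Lambda^p w\neq\emptyset$ whenever $v\sim w$.

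As a preliminary, each $\Pp_v^+$ meets $(\N\setminus\{0\})^k$: for each $v$ and each $i$, equation~\eqref{eq:nosinks} and strong connectivity yield a cycle at $v$ whose degree has $i$\textsuperscript{th} coordinate at least $1$, and concatenating these $k$ cycles gives one of strictly positive degree. Fix now a base vertex $v_0$ and $q\in\Pp_{v_0}^+\cap(\N\setminus\{0\})^k$. For each $v\in\Lambda^0$ choose paths $\alpha_v\in v\Lambda v_0$ and $\beta_v\in v_0\Lambda v$, and set $a_v:=d(\alpha_v)$, $b_v:=d(\beta_v)$. Whenever $v\sim w$, \lemref{twopaths} applied to $\alpha_v\beta_w\in v\Lambda w$ gives $(a_v+b_w)+\Pp_\Lambda=C(v,w)=0$, so $a_v+b_w\in\Pp_\Lambda$. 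For any cycle $\mu\in v_0\Lambda v_0$, the concatenation $\alpha_v\mu\beta_w\in v\Lambda w$ has degree $a_v+d(\mu)+b_w$. Hence it suffices to find one $p$ such that $p-(a_v+b_w)\in\Pp_{v_0}^+$ for every pair $v\sim w$.

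The main obstacle is then establishing the following higher-rank Frobenius-type claim: \emph{there exists $N\in\N$ such that every $g\in\Pp_\Lambda$ with $g_i\geq N$ for all $i$ belongs to $\Pp_{v_0}^+$.} This is the $\N^k$-analogue of the classical fact that a subsemigroup of $\N$ whose group of differences is $d\Z$ contains every sufficiently large multiple of $d$; its plausibility rests on $\Pp_{v_0}^+$ generating $\Pp_\Lambda$ as a group by \lemref{lem:Pp} and containing the strictly positive element $q$, so that $\Pp_{v_0}^+$ is closed under translation by $q$ and has full group of differences $\Pp_\Lambda$. I expect the proof to go through a Perron--Frobenius analysis of the commuting matrices $A_i$ restricted to the $\sim$-class of $v_0$, using that within such a class (which is preserved by the paths of degree in $\Pp_\Lambda$) the family $\{A^n\}_{n\in\Pp_\Lambda}$ should be primitive.

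Granting the claim, set $M:=\max_{v,w\in\Lambda^0}\|a_v+b_w\|_\infty$, pick an integer $n\geq N+M$, and put $p:=nq$. Then $p\in\Pp_{v_0}^+\subseteq\Pp_\Lambda$ has every coordinate at least $n$, and for each pair $v\sim w$ the element $p-(a_v+b_w)\in\Pp_\Lambda$ has every coordinate at least $n-M\geq N$, so the claim places it in $\Pp_{v_0}^+$. A cycle $\mu\in v_0\Lambda v_0$ of that degree then produces the desired path $\alpha_v\mu\beta_w\in v\Lambda^p w$, finishing the proof.
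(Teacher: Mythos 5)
Your setup and the easy direction (that $v\Lambda^p w\neq\emptyset$ forces $v\sim w$ for any $p\in\Pp_\Lambda$) match the paper, but the proof has a genuine gap: the ``higher-rank Frobenius-type claim'' --- that there is an $N$ such that every $g\in\Pp_\Lambda$ with all coordinates at least $N$ lies in $\Pp_{v_0}^+$ --- is precisely the hard part of the argument, and you do not prove it; you only conjecture a route. Moreover, the claim does not follow from the two properties you invoke in its support (that $\Pp_{v_0}^+$ has group of differences $\Pp_\Lambda$ and contains a strictly positive element): the submonoid $S=\langle(1,0),(1,2)\rangle$ of $\N^2$ has group of differences $G=\{(m,n)\mid n\in 2\Z\}$ of finite index in $\Z^2$ and contains the strictly positive element $(1,2)$, yet $(N,2N+2)\in G$ has both coordinates at least $N$ and does not lie in $S$ for any $N$. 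So any correct proof of your claim would have to use structure of $\Pp_{v_0}^+$ beyond what you cite. The Perron--Frobenius route you sketch is also close to circular: primitivity of $A^p$ restricted to a $\sim$-class is \emph{deduced from} Proposition~\ref{properiod} in Proposition~\ref{prp:core decomp}, and is not available beforehand.

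The paper sidesteps any asymptotic statement. For each of the finitely many pairs $v\sim w$ it writes $g_{v,w}:=d(\lambda_v\mu_w)\in\Pp_\Lambda$ as $m_{v,w}-n_{v,w}$ with $m_{v,w},n_{v,w}\in\Pp_{u_0}^+\cap(\N\setminus\{0\})^k$: Lemma~\ref{lem:Pp} gives cycles $\alpha,\beta$ at $u_0$ with $d(\alpha)-d(\beta)=g_{v,w}$, and appending a fixed return path $\tau\tau'$ of strictly positive degree to both makes their degrees strictly positive without changing the difference. It then sets $p:=\sum_{v\sim w}m_{v,w}$. For a fixed pair $v\sim w$, the element $p-g_{v,w}=n_{v,w}+\sum_{(v',w')\neq(v,w)}m_{v',w'}$ is visibly a finite sum of elements of the semigroup $\Pp_{u_0}^+$, hence is the degree of some cycle $\nu$ at $u_0$, and $\lambda_v\nu\mu_w\in v\Lambda^p w$. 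To repair your argument, replace the unproved uniform claim by this finite, pair-by-pair bookkeeping: you only ever need $p-(a_v+b_w)\in\Pp_{v_0}^+$ for the finitely many pairs at hand, and choosing $p$ as a sum over all such pairs makes each required membership hold by construction.
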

\begin{proof}
Fix a vertex $u_0 \in \Lambda^0$. For each $v \in \Lambda^0$ fix paths $\lambda_v \in
v\Lambda u_0$ and $\mu_v \in u_0 \Lambda v$. For each $v,w \in \Lambda^0$, define
$g_{v,w} := d(\lambda_v\mu_w) \in \N^k$.

We define $p \in (\N \setminus\{0\})^k$ as follows. For each $v,w \in \Lambda$ with $v
\sim w$, we have $d(\lambda_v\mu_w) + \Pp_\Lambda = C(v,w) = 0 + \Pp_\Lambda$, and so
$g_{v,w} \in \Pp_\Lambda$. By Lemma~\ref{lem:Pp} there are cycles $\alpha,\beta$ in
$u_0\Lambda u_0$ with $d(\alpha) - d(\beta) = g_{v,w}$. Since $\Lambda$ has no sources,
there exists $\tau \in u_0\Lambda^{(1, \dots, 1)}$ and since $\Lambda$ is strongly
connected, there exists $\tau' \in s(\tau)\Lambda u_0$. Now $\alpha\tau\tau'$ and
$\beta\tau\tau'$ are cycles, and $m_{v,w} := d(\alpha\tau\tau')$ and $n_{v,w} :=
d(\beta\tau\tau')$ belong to $\Pp_{u_0}^+$. In particular, $m_{v,w}, n_{v,w} \in
\Pp_\Lambda \cap (\N \setminus \{0\})^k$ and $g_{v,w} = m_{v,w} - n_{v,w}$. Let
\[
p := \sum_{v\sim w} m_{v,w}.
\]
Since $\Pp_{u_0}^+$ is a semigroup, we have $p \in \Pp_\Lambda \cap (\N \setminus
\{0\})^k$. We show that $v\sim w$ if and only if $v\Lambda^p w \not= \emptyset$.

First suppose that $v \sim w$. We have
\[
p = \sum_{v' \sim w'} m_{v',w'}
    = g_{v,w} + \Big(n_{v,w} + \sum_{v' \sim w', (v',w')\not= (v,w)} m_{v', w'}\Big).
\]
Let $n := \Big(n_{v,w} + \sum_{v' \sim w', (v',w')\not= (v,w)} m_{v', w'}\Big)$. Then $n
\in \Pp_{u_0}^+$; say $\nu \in u_0 \Lambda^n u_0$. So $d(\lambda_v \nu \mu_w) = g_{v,w} +
n = p$, giving $\lambda_v\nu\mu_w \in v\Lambda^p w$.

Now suppose that $v\Lambda^p w \not= \emptyset$, say  $\lambda \in v\Lambda^p w$. Then
$C(v,w) = d(\lambda) + \Pp_\Lambda = p +\Pp_\Lambda = 0 + \Pp_\Lambda$, giving $v\sim w$.
\end{proof}

\begin{proposition} \label{pbij}
Let $\Lambda$ be a strongly connected finite $k$-graph. For $v \in \Lambda^0$, the map
$C(\cdot, v) : \Lambda^0 \to \Z^k/\Pp_\Lambda$ induces a bijection $\widetilde{C}_v :
\lms \to \Z^k/\Pp_\Lambda$.  There is a free and transitive action of $\Z^k/\Pp_\Lambda$
on~$\lms$ such that $\widetilde{C}_v\big((g + \Pp_\Lambda)\cdot [w]\big) = g +
\widetilde{C}_v([w])$ for all $g \in \Z^k$, $v \in \Lambda^0$ and $[w] \in \lms$.
\end{proposition}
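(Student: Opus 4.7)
The plan is to first establish the bijection $\widetilde{C}_v$, and then transport the translation action of $\Z^k/\Pp_\Lambda$ on itself over to $\lms$. The key input will be Proposition~\ref{properiod}, which guarantees a strictly positive element $p \in \Pp_\Lambda$; this is exactly what is needed to upgrade ``every element of $\Z^k/\Pp_\Lambda$ can eventually be realised modulo $\Pp_\Lambda$'' to ``every element of $\Z^k/\Pp_\Lambda$ is realised as the degree of an actual path ending at $v$.''

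First I would check that $C(\cdot, v)$ descends to $\lms$: if $w \sim w'$, then $C(w,w') = 0$, and applying Lemma~\ref{twopaths} gives $C(w,v) = C(w,w') + C(w',v) = C(w',v)$. So $\widetilde{C}_v$ is well-defined. For injectivity, if $\widetilde{C}_v([w]) = \widetilde{C}_v([w'])$, then $C(w,w') = C(w,v) + C(v,w') = C(w,v) - C(w',v) = 0$, so $w \sim w'$. For surjectivity, fix $g \in \Z^k$. By Proposition~\ref{properiod}, there exists $p \in \Pp_\Lambda \cap (\N \setminus \{0\})^k$, so for sufficiently large $N$ we have $g + Np \in \N^k$. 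By~\eqref{eq:nosinks}, $\Lambda^{g+Np} v \neq \emptyset$; choose $\lambda$ in this set. Then $C(r(\lambda), v) = d(\lambda) + \Pp_\Lambda = (g + Np) + \Pp_\Lambda = g + \Pp_\Lambda$ since $Np \in \Pp_\Lambda$. Hence $\widetilde{C}_v([r(\lambda)]) = g + \Pp_\Lambda$.

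Next I would define the action. Given $g + \Pp_\Lambda \in \Z^k/\Pp_\Lambda$ and $[w] \in \lms$, fix any $v$ and set $(g+\Pp_\Lambda) \cdot [w] := \widetilde{C}_v^{-1}(g + \widetilde{C}_v([w]))$. The potential obstacle is that this a priori depends on the auxiliary choice of $v$; this is the one point that is not completely automatic. To check independence, let $[w'] = (g+\Pp_\Lambda)\cdot[w]$ computed using $v$, so $C(w',v) = g + C(w,v)$. For any other vertex $v'$, apply Lemma~\ref{twopaths} to both sides: $C(w',v') = C(w',v) + C(v,v') = g + C(w,v) + C(v,v') = g + C(w,v')$. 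Thus the same class $[w']$ is obtained using $v'$, so the action is well-defined.

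Finally, the action is a group action because translation of $\Z^k/\Pp_\Lambda$ on itself is, and $\widetilde{C}_v$ intertwines the two by construction. Freeness and transitivity of the action on $\lms$ follow from freeness and transitivity of translation on $\Z^k/\Pp_\Lambda$ via the bijection $\widetilde{C}_v$. The identity $\widetilde{C}_v((g+\Pp_\Lambda)\cdot[w]) = g + \widetilde{C}_v([w])$ holds by definition. The main obstacle throughout is surjectivity of $\widetilde{C}_v$; everything else is either formal or an invocation of Lemma~\ref{twopaths}, while surjectivity genuinely requires producing paths of prescribed degree, which is precisely where Proposition~\ref{properiod} and the no-sinks condition~\eqref{eq:nosinks} do the work.
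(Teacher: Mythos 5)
Your proof is correct and follows essentially the same route as the paper's: descend $C(\cdot,v)$ to $\lms$, check injectivity and surjectivity, pull back the translation action, and verify independence of the base vertex via Lemma~\ref{twopaths}. The only minor difference is in surjectivity, where the paper simply writes $g = m-n$ with $m,n\in\N^k$ and concatenates a path in $\Lambda^m v$ with one in $r(\lambda)\Lambda^n$ using~\eqref{eq:nosinks}, whereas you invoke the strictly positive $p\in\Pp_\Lambda$ from Proposition~\ref{properiod} to translate $g$ into $\N^k$ --- both work, but the paper's version avoids relying on that heavier result.
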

\bp%
If $u \sim w$ then $C(u,w) = 0$, and so Lemma~\ref{twopaths} gives $C(u,v) = C(u,w) +
C(w,v) = C(w,v)$. So $C(\cdot,v)$ descends to a function $\widetilde{C}_v : \lms \to
\Z^k/\Pp_\Lambda$. If $\widetilde{C}_v([u]) = \widetilde{C}_v([w])$, then
Lemma~\ref{twopaths} gives $C(u,w) = C(u,v) + C(v,w) = C(u,v) - C(w,v) =
\widetilde{C}_v([u]) - \widetilde{C}_v([w]) = 0$ and so $u \sim w$. So $\widetilde{C}_v$
is injective. For surjectivity, fix $g+\Pp_\Lambda \in \Z^k/\Pp_\Lambda$, and express $g
= m-n$ with $m,n \in \N^k$. By~\eqref{eq:nosinks}, there exist $\lambda \in \Lambda^m v$,
and then $\mu \in r(\lambda)\Lambda^n$. Another application of Lemma~\ref{twopaths} gives
\[
\widetilde{C}_v([s(\mu)]) = C(s(\mu),v) = C(r(\lambda),v) - C(r(\lambda),s(\mu))
    = m-n+\Pp_\Lambda = g+\Pp_\Lambda,
\]
so $\widetilde{C}_v$ is surjective.

Pulling back the action of $\Z^k/\Pp_\Lambda$ on itself by translation along the
bijection $\widetilde{C}_v$ gives the desired free and transitive action on $\lms$.
Choose another $v' \in \Lambda^0$ and let $\diamond$ denote the action of
$\Z^k/\Pp_\Lambda$ such that $\widetilde{C}_{v'}\big((g + \Pp_\Lambda)\diamond[w]\big) =
g + \widetilde{C}_{v'}([w])$. Choose $u \in (g + \Pp_\Lambda)\cdot[w]$ and $u' \in (g +
\Pp_\Lambda)\diamond[w]$. Using Lemma~\ref{twopaths} repeatedly, we check that
\begin{align*}
C(u, u')
    &= C(u,v) + C(v, v') - C(u',v')
    = C(v,v') + \widetilde{C}_v\big((g + \Pp_\Lambda)\cdot [w]\big)
            - \widetilde{C}_{v'}\big((g + \Pp_\Lambda)\diamond[w]\big) \\
    &= C(v,v') + (g + \widetilde{C}_v([w])) - (g + \widetilde{C}_{v'}([w]))
    = C(v,v') + C(w,v) - C(w, v') = 0.
\end{align*}
So $(g + \Pp_\Lambda) \cdot [w] = (g + \Pp_\Lambda) \diamond [w]$ for all $w$.%
\ep

\begin{corollary}\label{cor:PersubsetP}
Let $\Lambda$ be a strongly connected finite $k$-graph. Then $\Pp_\Lambda$ has index at
most $|\Lambda^0|$ in~$\Z^k$, and $\Per\Lambda \subseteq \Pp_\Lambda$. We have
$\Pp_\Lambda=\Z^k$ if and only if $\Lambda$ is primitive.
\end{corollary}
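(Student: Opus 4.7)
The plan is to treat the three claims in sequence, each using a result already developed above.

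For the index bound, I would invoke Proposition~\ref{pbij}, which supplies a bijection $\widetilde{C}_v\colon \lms \to \Z^k/\Pp_\Lambda$. Since $\lms$ is a quotient of the finite set $\Lambda^0$, its cardinality is at most $|\Lambda^0|$, so $[\Z^k:\Pp_\Lambda] = |\lms| \le |\Lambda^0|$.

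For the inclusion $\Per\Lambda \subseteq \Pp_\Lambda$, I would take $g = m-n \in \Per\Lambda$ with $m,n \in \N^k$ and pick any infinite path $x \in \Lambda^\infty$ (which exists since $\Lambda$ is strongly connected and hence has no sources by~\eqref{eq:nosinks}). Consider the finite initial segments $\mu := x(0,m)$ and $\nu := x(0,n)$. Both have range $x(0,0)$, and the definition of $\Per\Lambda$ forces $\sigma^m(x) = \sigma^n(x)$, so $s(\mu) = x(m,m) = r(\sigma^m(x)) = r(\sigma^n(x)) = x(n,n) = s(\nu)$. Now Lemma~\ref{twopaths} gives $m - n = d(\mu) - d(\nu) \in \Pp_\Lambda$.

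For the primitivity equivalence, I would use Proposition~\ref{properiod} in both directions. If $\Pp_\Lambda = \Z^k$, then $\lms$ consists of a single class by Proposition~\ref{pbij}, so $v \sim w$ for all $v,w \in \Lambda^0$. Proposition~\ref{properiod} then produces $p \in (\N\setminus\{0\})^k$ with $v\Lambda^p w \neq \emptyset$ for every $v,w$, witnessing primitivity. Conversely, if $\Lambda$ is primitive with witness $p \in \N^k$, then for any $v,w$ a choice of $\lambda \in v\Lambda^p w$ gives $C(v,w) = d(\lambda) + \Pp_\Lambda = p + \Pp_\Lambda$; in particular $C(v,v) = 0$ forces $p \in \Pp_\Lambda$, whence $C(v,w) = 0$ for all $v,w$. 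Thus $\lms$ is a single class, and Proposition~\ref{pbij} gives $\Z^k/\Pp_\Lambda = 0$.

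No step looks genuinely difficult; the only mild subtlety is making sure the infinite path argument for $\Per\Lambda \subseteq \Pp_\Lambda$ uses only the existence of infinite paths (granted by~\eqref{eq:nosinks}), rather than any information about the measure $\mueq$ or the groupoid. The rest is bookkeeping with Lemma~\ref{twopaths} and the bijection of Proposition~\ref{pbij}.
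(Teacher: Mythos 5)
Your proposal is correct, and two of its three parts coincide with the paper's proof: the index bound is read off from the bijection of Proposition~\ref{pbij} exactly as in the paper, and the primitivity equivalence is argued the same way (primitive $\Rightarrow$ $C \equiv 0$ $\Rightarrow$ $\lms$ a singleton $\Rightarrow$ $\Pp_\Lambda = \Z^k$, and conversely via Proposition~\ref{properiod}). The one place you diverge is the inclusion $\Per\Lambda \subseteq \Pp_\Lambda$. The paper cites the inclusion~\eqref{eper}, $\Per\Lambda\subseteq\{m-n\mid A^m=A^n\}$, imported from \cite[Remark~7.2]{AHLRS2}, and then notes that $A^m = A^n$ (with nonzero rows) produces paths $\mu,\nu$ of degrees $m,n$ with common range and source, so Lemma~\ref{twopaths} applies. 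You instead work directly from the definition of $\Per\Lambda$: a single infinite path $x$ (which exists by~\eqref{eq:nosinks}) with $\sigma^m(x)=\sigma^n(x)$ yields initial segments $\mu = x(0,m)$ and $\nu = x(0,n)$ with $r(\mu)=r(\nu)$ and $s(\mu)=s(\nu)$, and Lemma~\ref{twopaths} finishes. Your route is slightly more self-contained, as it bypasses the external fact~\eqref{eper} and uses only the existence of one infinite path; the paper's route has the mild advantage of staying entirely at the level of the connectivity matrices. Both are sound, and the difference is cosmetic rather than structural.
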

\bp%
The bijection $\Z^k/\Pp_\Lambda \to \lms$ of Proposition~\ref{pbij} shows that
$|\Z^k/\Pp_\Lambda| \le |\Lambda^0|$. We have $\Per\Lambda \subseteq \{d(\mu) - d(\nu)
\mid r(\mu) = r(\nu)\text{ and }s(\mu) = s(\nu)\}$ by~\eqref{eper} and so
Lemma~\ref{twopaths} gives $\Per\Lambda \subseteq \Pp_\Lambda$.

Suppose that $\Lambda$ is primitive; say $v\Lambda^p w \not= \emptyset$ for all $v,w$.
Then each $v\Lambda^p v \not= \emptyset$, so $p \in \Pp_\Lambda$. For $v,w \in \Lambda^0$
and $\lambda \in v\Lambda^p w$, we have $C(v,w) = d(\lambda) + \Pp_\Lambda = p +
\Pp_\Lambda = 0 + \Pp_\Lambda$. So $\lms$, and hence also $\Z^k/\Pp_\Lambda$, is a
singleton, giving $\Pp_\Lambda = \Z^k$. Now suppose that $\Pp_\Lambda=\Z^k$. Then
$C(v,w) = 0$ for all $v,w$, and then $\Lambda$ is primitive by Proposition~\ref{properiod}.%
\ep

With $\sim$ as in Lemma~\ref{twopaths}, for an equivalence class $\omega\in\lms$ we
identify $\R^\omega$ with $\operatorname{span}\{\delta_v \mid v \in \omega\} \subseteq
\R^{\Lambda^0}$. So $\R^{\Lambda^0}$ decomposes as the internal direct sum
\begin{equation}\label{edirsum}\textstyle
\R^{\Lambda^0}=\bigoplus_{\omega\in\lms}\R^\omega.
\end{equation}

Observe that the action of Proposition~\ref{pbij} determines a transitive action of
$\N^k$ on $\lms$.

\begin{proposition} \label{pFrobenius}
Let $\Lambda$ be a strongly connected finite $k$-graph. The decomposition~\eqref{edirsum}
defines a system of imprimitivity for the semigroup of matrices~$(A^n)_{n\in\N^k}$; that
is, $A^n\R^\omega\subseteq \R^{n \cdot \omega} $ for all $n\in\N^k$ and $\omega\in\lms$.
\end{proposition}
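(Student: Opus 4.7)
The plan is to verify the containment on basis vectors and then unwind the definition of the action of $\Z^k/\Pp_\Lambda$ on $\lms$ from Proposition~\ref{pbij}. Fix $\omega \in \lms$ and $n \in \N^k$, and pick a representative $v \in \omega$; by linearity, it suffices to show $A^n \delta_v \in \R^{n\cdot\omega}$.

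Expanding using $A^n(w,v) = |w\Lambda^n v|$, we have
\[
A^n \delta_v = \sum_{w \in \Lambda^0} A^n(w,v)\, \delta_w = \sum_{\lambda \in \Lambda^n v} \delta_{r(\lambda)}.
\]
Consequently it suffices to prove that $r(\lambda) \in n \cdot \omega$ for every $\lambda \in \Lambda^n v$. By definition $n \cdot \omega$ is the unique class $\omega' \in \lms$ with $\widetilde{C}_{v_0}(\omega') = n + \widetilde{C}_{v_0}(\omega)$ for any chosen basepoint $v_0 \in \Lambda^0$, and since $\widetilde{C}_{v_0}$ is a bijection (Proposition~\ref{pbij}), it is enough to check this identity with $\omega'$ replaced by $[r(\lambda)]$.

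Now the first assertion of Corollary~\ref{cor:C<->gpd} (equivalently a direct application of Lemma~\ref{twopaths} to the factorisation along $\lambda$ and a chosen path from $v$ to $v_0$) yields
\[
C(r(\lambda), v_0) = d(\lambda) + C(s(\lambda), v_0) = n + C(v, v_0) \pmod{\Pp_\Lambda},
\]
so $\widetilde{C}_{v_0}([r(\lambda)]) = n + \widetilde{C}_{v_0}(\omega)$, which gives $[r(\lambda)] = n \cdot \omega$ as required.

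The argument is essentially a bookkeeping exercise, and I do not expect any real obstacle: the substantive content was already absorbed into the construction of $C$ (Lemma~\ref{twopaths}) and into the verification that the action of $\Z^k/\Pp_\Lambda$ on $\lms$ in Proposition~\ref{pbij} is well defined and transitive. The only point to be careful about is invoking the correct formula from Corollary~\ref{cor:C<->gpd} (with $w = v_0$) rather than re-deriving it, so the proof remains short.
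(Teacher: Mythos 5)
Your proof is correct and follows essentially the same route as the paper's (which is a one-line argument: if $A^n(w,v)>0$ then $C(w,v)=n+\Pp_\Lambda$, so $[w]=n\cdot\omega$); you merely route the bookkeeping through an auxiliary basepoint $v_0$ where the paper takes $v_0=v$ and uses $\widetilde{C}_v(\omega)=C(v,v)=0$ directly.
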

\bp%
Take $\omega \in \lms$ and $v\in\omega$. If $|w\Lambda^nv| = A^n(w,v) > 0$, then
$C(w,v)=n+\Pp_\Lambda$ and hence the class of $w$ in $\lms$ is $n\cdot\omega$.%
\ep

For the next result, let $\Delta_k$ denote the $k$-graph with objects $\Z^k$, morphisms
$\{(g,h) \in \Z^k \times \Z^k \mid g \le h\}$, structure maps $r(g,h) = g$, $s(g,h) = h$
and $d(g,h) = h-g$, and composition $(g,h)(h,l) = (g,l)$. There is a free and transitive
action of $\Z^k$ on $\Delta_k$ by translation, and for any subgroup $G \le \Z^k$, the
quotient $\Delta_k/G$ is a $k$-graph under the inherited operations. If $G$ has finite
index, then $\Delta_k/G$ is finite and strongly connected and we think of it as a simple
$k$-dimensional cycle. We write $[g,h]$ for the image of $(g,h) \in \Delta_k$ in
$\Delta_k/G$, and we identify $(\Delta_k/G)^0$ with $\Z^k/G$ via $[g,g] \mapsto [g]$.

\begin{proposition}\label{prp:P=Per}
Let $\Lambda$ be a strongly connected finite $k$-graph. The following are equivalent.
\begin{enumerate}
\item\label{it:L=D/P} $\Lambda \cong \Delta_k/\Pp_\Lambda$;
\item\label{it:1in} $|v\Lambda^{e_i}| = 1$ for all $v\in \Lambda^0$ and $1\le i \le
    k$;
\item\label{it:P=Per} $\Pp_\Lambda = \Per\Lambda$; and
\item\label{it:rho=1} $\rho(\Lambda) = (1, \dots, 1)$.
\end{enumerate}
\end{proposition}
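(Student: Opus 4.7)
The plan is to prove the four conditions equivalent via the cycle $(1) \Rightarrow (2) \Rightarrow (3) \Rightarrow (4) \Rightarrow (2) \Rightarrow (1)$. Most of these are short arguments reducing to standard Perron--Frobenius results; the main technical step is $(3) \Rightarrow (4)$.

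For the routine implications, $(1) \Rightarrow (2)$ is immediate from the definition of $\Delta_k/\Pp_\Lambda$, in which each vertex $[g]$ has $[g, g+e_i]$ as the unique outgoing morphism of degree $e_i$. For $(2) \Rightarrow (3)$, condition (2) combined with \eqref{eq:nosinks} forces each $A_i$ to be a permutation matrix, giving commuting permutations $\sigma_i$ of $\Lambda^0$; strong connectedness makes the resulting $\Z^k$-action transitive with stabilizer precisely $\Pp_\Lambda$. Since $|v\Lambda^n| = 1$ for all $v,n$, every infinite path is determined by its range, so $\sigma^m = \sigma^n$ on $\Lambda^\infty$ iff $\sigma^{m-n}$ fixes $\Lambda^0$; hence $\Per\Lambda$ is this same stabilizer, yielding $\Per\Lambda = \Pp_\Lambda$. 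For $(4) \Rightarrow (2)$, take a strictly positive common Perron eigenvector $\xi$ satisfying $A_i\xi = \xi$, which exists by the Perron--Frobenius machinery cited on page~\pageref{pg:mpctve}; at a vertex $v_0$ minimizing $\xi$, the identity $\xi_{v_0} = \sum_w A_i(v_0,w)\xi_w$ combined with $\xi_w \geq \xi_{v_0}$ forces $|v_0\Lambda^{e_i}| = 1$ and makes the unique $e_i$-successor also a minimizer. Strong connectedness propagates this to every vertex, so $\xi$ is constant and each row of $A_i$ sums to $1$. Finally, for $(2) \Rightarrow (1)$, fix $v_0 \in \Lambda^0$ and note that $g \mapsto \sigma^g(v_0)$ descends to a bijection $\Z^k/\Pp_\Lambda \to \Lambda^0$; then $[g,h] \mapsto$ the unique element of $\sigma^g(v_0)\Lambda^{h-g}$ extends this to a $k$-graph isomorphism $\Delta_k/\Pp_\Lambda \to \Lambda$.

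The main step is $(3) \Rightarrow (4)$. By Proposition~\ref{properiod}, pick $p \in \Pp_\Lambda \cap (\N\setminus\{0\})^k$; assumption (3) places $p$ in $\Per\Lambda$. Writing $p = m - n$ with $\sigma^m = \sigma^n$ on $\Lambda^\infty$ and applying both sides to $\nu \cdot x$ for $\nu \in \Lambda^n$ with $s(\nu) = r(x)$, one obtains $\sigma^p(x) = x$ for every $x \in \Lambda^\infty$. So every infinite path is $p$-periodic, and $x \mapsto x(0,p)$ is a bijection $v\Lambda^\infty \to v\Lambda^p v$; in particular $|v\Lambda^\infty|$ is finite. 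Since every finite path extends to an infinite one, $|v\Lambda^{Np}| \leq |v\Lambda^\infty|$ for every $N \geq 0$, so the row sums of $(A^p)^N$ stay bounded as $N \to \infty$. Now take a positive joint Perron eigenvector $\xi$ with $A_i\xi = \rho(A_i)\xi$, so that $(A^p)^N\xi = \rho(\Lambda)^{Np}\xi$. Writing $\mathbf{1} \geq c\xi$ for some $c > 0$, we have $(A^p)^N \mathbf{1} \geq c\rho(\Lambda)^{Np}\xi$ coordinatewise, and the boundedness of the left-hand side forces $\rho(\Lambda)^p \leq 1$. Together with $\rho(\Lambda)^p \geq 1$ and the strict positivity of every coordinate of $p$, this gives $\rho(A_i) = 1$ for all $i$. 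The expected difficulty is exactly at this step: upgrading the qualitative $p$-periodicity of infinite paths to a quantitative bound on path counts that collides with the exponential growth coming from the Perron eigenvector; once the bijection $v\Lambda^\infty \leftrightarrow v\Lambda^p v$ is in place, the rest is straightforward.
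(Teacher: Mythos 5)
Your proof is correct, but several of the implications are argued along genuinely different lines from the paper. For $(3)\Rightarrow(4)$ the paper just proves the contrapositive: if $\rho(\Lambda)\neq(1,\dots,1)$ then by~\eqref{eper} the group $\Per\Lambda$ lies in the kernel of $g\mapsto g\cdot\log\rho(\Lambda)$ and so has infinite index in $\Z^k$, whereas $\Pp_\Lambda$ has finite index by Corollary~\ref{cor:PersubsetP}. That is a two-line argument; your direct route via $\sigma^p=\operatorname{id}$ and the bijection $v\Lambda^\infty\to v\Lambda^p v$ is longer but self-contained and yields a quantitative bound on $|v\Lambda^{Np}|$ (and, as a side remark, once the row sums of $(A^p)^N$ are bounded you get $\rho(A^p)\le 1$ straight from the spectral radius formula, without needing the common eigenvector at this step). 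For the closing leg the paper goes $(4)\Rightarrow(1)$ directly: $\rho(A^p)=1$ forces the strictly positive diagonal blocks of $A^p$ coming from Proposition~\ref{properiod} to be $1\times1$, so $\sim$ is trivial and Propositions~\ref{pbij} and~\ref{pFrobenius} deliver the isomorphism with $\Delta_k/\Pp_\Lambda$; you instead prove $(4)\Rightarrow(2)$ by a minimum-principle argument on the Perron eigenvector and then $(2)\Rightarrow(1)$ via the transitive $\Z^k$-action on vertices with stabiliser $\Pp_\Lambda$. Both work: the paper's route reuses the Frobenius analysis of Section~\ref{sec:PF}, while yours is closer to the classical argument that an irreducible nonnegative matrix with spectral radius one and a constant eigenvector is a permutation matrix. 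Your $(2)\Rightarrow(3)$ likewise replaces the paper's appeal to \cite[Lemma~5.1]{AHLRS2} with the elementary observation that the $A_i$ are commuting permutation matrices whose common stabiliser is $\Pp_\Lambda$; this is a valid substitute. The one point to tidy is your repeated use of a strictly positive common eigenvector $\xi$ with $A_i\xi=\rho(A_i)\xi$: this does hold for strongly connected finite $k$-graphs, but it is the Perron--Frobenius eigenvector result of \cite{AHLRS2} rather than the facts about $\rho(A^n)$ recorded on page~\pageref{pg:mpctve}, so it should be cited explicitly.
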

\bp%
\mbox{(\ref{it:L=D/P})$\implies$(\ref{it:1in}).} Take an isomorphism $\phi : \Lambda \to
\Delta_k/\Pp_\Lambda$. For $v \in \Lambda^0$, pick $g \in \Z^k$ with $\phi(v) = [g]$.
Then $|v\Lambda^{e_i}| = |[g](\Delta_k/\Pp_\Lambda)^{e_i}| = |\{[g, g+e_i]\}| = 1$.

\mbox{(\ref{it:1in})$\implies$(\ref{it:P=Per}).} We have $\Per\Lambda \subseteq
\Pp_\Lambda$ by Corollary~\ref{cor:PersubsetP}. Condition~(\ref{it:1in}) implies that
each $|v\Lambda^\infty| = 1$. So if $\mu$ is a cycle and $x \in s(\mu)\Lambda^\infty$,
then $\mu x \in s(\mu)\Lambda^\infty$ forces $\mu x = x$. Hence $\sigma^{d(\mu)}(x) = x$
for all $x \in s(\mu)\Lambda^\infty$, and \cite[Lemma~5.1]{AHLRS2} gives
$\sigma^{d(\mu)}(y) = y$ for all $y$. Since the degrees of cycles generate~$\Pp_\Lambda$
we obtain $\Pp_\Lambda \subseteq \Per\Lambda$.

\mbox{(\ref{it:P=Per})$\implies$(\ref{it:rho=1}).} We prove the contrapositive. Suppose
that $\rho(\Lambda) \not= (1,\dots,1)$. By~\eqref{eper}, $\Per\Lambda$ is contained in
the group of elements $g\in\Z^k$ such that $g\cdot\log\rho(\Lambda)=0$. Hence
$\Per\Lambda$ has infinite index in $\Z^k$, while $\Pp_\Lambda$ has finite index by
Corollary~\ref{cor:PersubsetP}.

\mbox{(\ref{it:rho=1})$\implies$(\ref{it:L=D/P}).} Proposition~\ref{properiod} gives $p
\in \N^k$ such that $v \sim w$ if and only if $v\Lambda^p w \not= \emptyset$. So $A^p$ is
block-diagonal with strictly positive diagonal blocks indexed by $\omega \in \lms$. Since
$\rho(A^p) = \rho(\Lambda)^p = 1$, each diagonal block of $A^p$ has spectral radius 1.
The only strictly positive square integer matrix with spectral radius~1 is the $1 \times
1$ identity matrix, so $A^p$ is the identity matrix, and $\sim$ is the trivial relation.
Now Propositions~\ref{pbij} and~\ref{pFrobenius} give a bijection $\phi : \Lambda^0 \to
\Z^k/\Pp_\Lambda$ such that $u\Lambda^n w \not= \emptyset$ implies $\phi(u)=n+\phi(w)$.
This shows that every column and row of the matrix $A^n$ has at most one nonzero entry.
Since $A^n$ has no zero columns and rows, and $\rho(A^n)=1$, this implies that $A^n$ is a
permutation matrix. Specifically, we have $|u\Lambda^n w| = \delta_{n + \Pp_\Lambda,
\phi(u) - \phi(w)}$. So $\lambda \mapsto [\phi(r(\lambda)), \phi(s(\lambda))]$ is a
bijection of $\Lambda$ onto $\Delta^k/\Pp_\Lambda$ that preserves range, source and
degree. Since each $|v\Lambda^n| = 1$ this bijection automatically preserves composition,
and hence is an isomorphism of
$k$-graphs.%
\ep

\section{Type classification of the \texorpdfstring{KMS$_1$}{KMS1}
    states of \texorpdfstring{$C^*(\Lambda)$}{C*(Lambda)}}
\label{sec:typeclass}

Let $\Lambda$ be a strongly connected finite $k$-graph, and let $C^*(\Lambda)^\gamma$ be
the fixed-point algebra for the gauge action $\gamma$ of $\T^k$ on $C^*(\Lambda)$. By
Lemma~\ref{lem:Rgamma}, we have $C^*(\Lambda)^\gamma \cong C^*(\RR^\gamma)$, and so
$C^*(\RR^\gamma)$ is AF (see \cite{KP}). Specifically, for each $n \in \N^k$ and $v \in
\Lambda^0$, put $\Ff_\Lambda(n,v) := \clsp\{s_\mu s^*_\nu \mid \mu,\nu \in \Lambda^n
v\}$, and then put $\Ff_\Lambda(n) := \clsp\{s_\mu s^*_\nu \mid \mu,\nu \in \Lambda^n\}$
for each $n$. The $s_\mu s^*_\nu$ in any given $\Ff_\Lambda(n,v)$ are matrix units, so
$\Ff_\Lambda(n,v) \cong \Mat_{\Lambda^n v}(\C)$, and we have $\Ff_\Lambda(n) =
\bigoplus_v \Ff_\Lambda(n,v)$. Relation~(CK) shows that if $\mu,\nu \in \Lambda^n v$ then
$s_\mu s^*_\nu = \sum_{\lambda \in v\Lambda^m} s_{\mu\lambda}s^*_{\nu\lambda} \in
\Ff_\Lambda(m+n)$ for all $m$. So each $\Ff_\lambda(n)\subset\Ff_\Lambda(m+n)$, with
inclusion matrix $A^m$, and the inductive limit of the algebras $\Ff_\Lambda(n)$ is
$C^*(\Lambda)^\gamma$.

Given a matrix $B\in\Mat_N(\N)$, let $\Ff_B$ denote the unital AF algebra whose Bratteli
diagram is stationary with $N$ vertices $\{v_{n,i} \mid 1 \le i \le N\}$ at level $n$,
and $B(i,j)$ edges connecting $v_{n,i}$ to $v_{n+1,j}$ for all $i,j$. That is, $\Ff_B =
\varinjlim C_n$, where $C_n = \bigoplus^N_{i=1} \Mat_{\sum_k B^n(k,i)}(\C)$, and the
partial inclusions $C_{n,i} \hookrightarrow C_{n+1, j}$ have multiplicity $B(i,j)$.

\begin{proposition}\label{prp:core decomp}
Let $\Lambda$ be a strongly connected finite $k$-graph, and let $\sim$ be the equivalence
relation on $\Lambda^0$ described in Lemma~\ref{twopaths}. Take $p \in (\N \setminus
\{0\})^k$ as in Proposition~\ref{properiod}, so $v \sim w$ if and only if $v\Lambda^p w
\not= \emptyset$. For $\omega \in \lms$, define $A^p_\omega \in \Mat_\omega(\N)$ by
$A^p_\omega(v,w) = |v\Lambda^p w|$ for $v,w\in\omega$, and define $q_\omega := \sum_{v
\in \omega} s_v \in C^*(\Lambda)$. Then each $A^p_\omega$ is primitive, the projections
$q_\omega$ are central in $C^*(\Lambda)^\gamma$, each $q_\omega C^*(\Lambda)^\gamma \cong
\Ff_{A^p_\omega}$, and $C^*(\Lambda)^\gamma = \bigoplus_{\omega \in \lms} q_\omega
C^*(\Lambda)^\gamma$.
\end{proposition}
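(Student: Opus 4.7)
The plan is to handle the four claims in sequence. First, primitivity of $A^p_\omega$ is immediate from Proposition~\ref{properiod}: for any $v, w \in \omega$ we have $v \sim w$, so $v\Lambda^p w \neq \emptyset$ and $A^p_\omega(v,w) \ge 1$; hence $A^p_\omega$ is strictly positive, which is stronger than primitivity. Next, for the decomposition $C^*(\Lambda)^\gamma = \bigoplus_\omega q_\omega C^*(\Lambda)^\gamma$, I observe that $\sum_{\omega \in \lms} q_\omega = \sum_{v \in \Lambda^0} s_v = 1_{C^*(\Lambda)}$ and the $q_\omega$ are pairwise orthogonal projections; once centrality in $C^*(\Lambda)^\gamma$ is established, the direct-sum decomposition follows.

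For centrality of $q_\omega$, I would verify commutation on the dense $*$-subalgebra $\bigcup_n \Ff_\Lambda(n)$. Take $\mu, \nu \in \Lambda^n v$. A direct computation with the Cuntz--Krieger relations gives $q_\omega s_\mu s^*_\nu = s_\mu s^*_\nu$ if $r(\mu) \in \omega$ and $0$ otherwise, and the symmetric statement for multiplication on the right is governed by $r(\nu)$. The decisive point is that Lemma~\ref{twopaths} yields $C(r(\mu), v) = n + \Pp_\Lambda = C(r(\nu), v)$, so $r(\mu) \sim r(\nu)$. Hence the conditions $r(\mu) \in \omega$ and $r(\nu) \in \omega$ coincide, and $q_\omega$ commutes with $s_\mu s^*_\nu$.

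For the identification $q_\omega C^*(\Lambda)^\gamma \cong \Ff_{A^p_\omega}$, I would exploit that $p \in (\N \setminus \{0\})^k$, so the subsequence $(\Ff_\Lambda(np))_{n \ge 0}$ is cofinal and has the same inductive limit $C^*(\Lambda)^\gamma$; thus $q_\omega C^*(\Lambda)^\gamma = \overline{\bigcup_n q_\omega \Ff_\Lambda(np)}$. Because $p \in \Pp_\Lambda$, any $\mu \in \Lambda^{np} v$ satisfies $C(r(\mu), v) = np + \Pp_\Lambda = 0$, so $r(\mu) \sim v$. Consequently $q_\omega \Ff_\Lambda(np, v) = \Ff_\Lambda(np, v)$ for $v \in \omega$ and is zero otherwise, giving $q_\omega \Ff_\Lambda(np) = \bigoplus_{v \in \omega} \Ff_\Lambda(np, v) \cong \bigoplus_{v \in \omega} \Mat_{\Lambda^{np} v}(\C)$. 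Since $A^p(u, v) = 0$ whenever $u \not\sim v$, iterating gives $|\Lambda^{np} v| = \sum_{u \in \omega} (A^p_\omega)^n(u, v)$, matching the block sizes in the definition of $\Ff_{A^p_\omega}$. Moreover, relation (CK) rewrites $s_\mu s^*_\nu = \sum_{\lambda \in v \Lambda^p} s_{\mu\lambda} s^*_{\nu\lambda}$ for $\mu, \nu \in \Lambda^{np} v$, so the inclusion $q_\omega \Ff_\Lambda(np) \hookrightarrow q_\omega \Ff_\Lambda((n+1)p)$ has multiplicity $|v\Lambda^p w| = A^p_\omega(v, w)$ from the $v$-summand into the $w$-summand for $v, w \in \omega$. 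This reproduces precisely the stationary Bratteli diagram of $\Ff_{A^p_\omega}$.

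The main obstacle will be the bookkeeping in this last step: one must carefully see why passing to the subsequence of degrees $np$ yields a genuinely stationary diagram on the cut-down algebra. This is precisely where the choice of $p \in \Pp_\Lambda$ pays off, since it keeps the $\Z^k/\Pp_\Lambda$-action on $\lms$ from Proposition~\ref{pbij} trivial at each stage and so preserves the equivalence class~$\omega$ throughout. Everything else reduces to routine calculations with the Cuntz--Krieger relations and the function $C$ from Lemma~\ref{twopaths}.
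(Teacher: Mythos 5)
Your proposal is correct and follows essentially the same route as the paper: primitivity from the strict positivity of $A^p_\omega$, cofinality of the subsequence $(np)_{n\in\N}$ in $\N^k$, and the observation that the inclusion $\Ff_\Lambda(np)\hookrightarrow\Ff_\Lambda((n+1)p)$ has matrix $A^p$, which is block-diagonal with blocks $A^p_\omega$, yielding the stationary Bratteli diagrams of the $\Ff_{A^p_\omega}$. The only (immaterial) difference is one of order --- you verify centrality of the $q_\omega$ directly via the cocycle $C$ and then deduce the direct-sum decomposition, whereas the paper establishes the decomposition first and reads off centrality from it.
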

\bp%
Each $A^p_\omega$ is primitive --- indeed its entries are all strictly positive --- by
choice of $p$. Since $p_i \not= 0$ for all $i$, the sequence $(np)_{n \in \N}$ is cofinal
in $\N^k$. Hence $C^*(\Lambda)^\gamma = \varinjlim_{n \in \N^k} \Ff_\Lambda(n) =
\varinjlim_{n\in\N} \Ff_\Lambda(np)$. As explained above, the inclusion $\Ff_\Lambda(np)
\hookrightarrow  \Ff_\Lambda((n+1) p)$ has matrix $A^p$, which is block-diagonal with
blocks $A^p_\omega$ by choice of $p$. So $C^*(\Lambda)^\gamma \cong \bigoplus_\omega
\Ff_{A^p_\omega}$ as claimed. Each $q_\omega$
is the identity projection in $A^p_\omega$, so is central.%
\ep

Since the $A^p_\omega$ are primitive, the stationary Bratteli diagrams they determine are
cofinal, so each $q_\omega C^*(\Lambda)^\gamma \cong \Ff_{A^p_\omega}$ is simple by
\cite[Corollary~3.5]{Bratteli}. It follows also that each $q_\omega C^*(\Lambda)^\gamma$
has a unique tracial state, see for example \cite[Proposition~10.4.9]{NS}. The trace
vector of the approximating subalgebra $\bigoplus_{v \in \omega} \Ff_\Lambda(np,v)$ of
$q_\omega C^*(\Lambda)^\gamma$ is given by $\rho(A^p_\omega)^{-n}\xi^\omega$, where
$\xi^\omega$ is the Perron-Frobenius eigenvector of~$A^p_\omega$ with unit 1-norm. As a
special case, we deduce that the following are equivalent:
\begin{enumerate}
\item $\Lambda$ is primitive;
\item $C^*(\Lambda)^\gamma$ is simple; and
\item $C^*(\Lambda)^\gamma$ carries a unique tracial state.
\end{enumerate}
Equivalence of the first two assertions has been already proved
in~\cite[Theorem~7.2]{MP}.

\begin{corollary} \label{cergodic}
Let $\Lambda$ be a strongly connected finite $k$-graph, and let $\sim$ be the equivalence
relation on $\Lambda^0$ described in Lemma~\ref{twopaths}. Let $\RR^\gamma$ be the
equivalence relation of Lemma~\ref{lem:Rgamma}. Then the ergodic components of
$\RR^\gamma$ with respect to $\mueq$ are the sets
\begin{equation} \label{exomega}
X_\omega := \{x\in \Lambda^\infty \mid r(x) \in \omega\}\ \ \text{for}\ \ \omega \in \lms.
\end{equation}
\end{corollary}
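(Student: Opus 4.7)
The plan is to establish three facts: (a) the sets $X_\omega$ form a Borel partition of $\Lambda^\infty$ into $\RR^\gamma$-invariant subsets; (b) $\mueq$ is $\RR^\gamma$-invariant and $\mueq(X_\omega) > 0$ for each $\omega \in \lms$; and (c) the action of $\RR^\gamma$ on each $X_\omega$ is ergodic with respect to $\mueq|_{X_\omega}$.

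For (a) I will first note that $\lms$ partitions $\Lambda^0$ by Lemma~\ref{twopaths}, so the $X_\omega$ partition $\Lambda^\infty$ according to the equivalence class of $r(x)$. To verify $\RR^\gamma$-invariance, I take $x \sim_{\RR^\gamma} y$, meaning $\sigma^n(x) = \sigma^n(y)$ for some $n \in \N^k$, so that $(x, 0, y) \in \G$, and apply Corollary~\ref{cor:C<->gpd} to conclude $C(r(x), r(y)) = 0 + \Pp_\Lambda$, whence $r(x) \sim r(y)$.

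For (b) I will use the formula~\eqref{RadNik} for the Radon--Nikodym cocycle: on $\RR^\gamma$ one may take $m = n$, so $D(x,y) = \rho(\Lambda)^0 = 1$ on the $\mueq$-conull set provided by Lemma~\ref{lRN}, showing $\mueq$ is $\RR^\gamma$-invariant. Positivity of $\mueq(X_\omega)$ follows from the explicit formula of \cite[Proposition~8.1]{AHLRS2}, which gives $\mueq(Z(v)) > 0$ for every $v \in \Lambda^0$, so in particular $\mueq(X_\omega) \ge \sum_{v\in\omega}\mueq(Z(v)) > 0$.

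For (c) the main idea is to exploit the isomorphism $q_\omega C^*(\Lambda)^\gamma \cong C^*(\RR^\gamma|_{X_\omega})$ arising from Lemma~\ref{lem:Rgamma} together with the clopen $\RR^\gamma$-invariance of $X_\omega$. By Proposition~\ref{prp:core decomp} and the remarks immediately following it, $q_\omega C^*(\Lambda)^\gamma \cong \Ff_{A^p_\omega}$ is a simple AF algebra with a unique tracial state~$\tau_\omega$. The normalised restriction $\mueq_\omega := \mueq(X_\omega)^{-1} \mueq|_{X_\omega}$ is an $\RR^\gamma|_{X_\omega}$-invariant probability measure, and composing it with the canonical conditional expectation $C^*(\RR^\gamma|_{X_\omega}) \to C(X_\omega)$ gives a tracial state which by uniqueness must coincide with $\tau_\omega$. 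Since the GNS representation of a simple C*-algebra at its unique trace is a factor representation (any nontrivial central projection in the weak closure would split $\tau_\omega$ into two distinct tracial states), the Feldman--Moore algebra $W^*(\RR^\gamma|_{X_\omega}, \mueq_\omega)$ is a factor, and hence $\RR^\gamma|_{X_\omega}$ is ergodic. The main obstacle I anticipate is precisely this last step: carefully matching the Feldman--Moore von Neumann algebra with the GNS completion of $q_\omega C^*(\Lambda)^\gamma$ at $\tau_\omega$, and then extracting ergodicity of the measured equivalence relation from factoriality of this completion.
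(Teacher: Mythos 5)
Your proposal is correct and follows essentially the same route as the paper: invariance of the sets $X_\omega$, positivity of their measure, and then ergodicity of each $\RR^\gamma|_{X_\omega}$ deduced from uniqueness of the trace on the simple AF algebra $q_\omega C^*(\Lambda)^\gamma \cong \Ff_{A^p_\omega}$ of Proposition~\ref{prp:core decomp}, which forces $W^*(\RR^\gamma|_{X_\omega})$ to be a factor and hence the relation to be ergodic by \cite[Proposition~2.9(2)]{FM2}. The only cosmetic difference is that you verify $\RR^\gamma$-invariance of $X_\omega$ directly from Corollary~\ref{cor:C<->gpd}, whereas the paper deduces it from centrality of the projections $\unb_{X_\omega}$ under the isomorphism $C^*(\Lambda)^\gamma \cong C^*(\RR^\gamma)$; the step you flag as the main obstacle (matching the Feldman--Moore algebra with the GNS completion at the trace) is handled in the paper exactly as you outline, via the cyclic separating diagonal vector.
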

\bp%
The isomorphism $C^*(\Lambda)^\gamma \cong C^*(\RR^\gamma)$ carries each projection
$q_\omega$ described in Proposition~\ref{prp:core decomp} to the characteristic function
$\unb_{X_\omega}$ of $X_\omega$ regarded as a subset of the diagonal in $\RR^\gamma$.
Hence the projections $\unb_{X_\omega}$ are central in $C^*(\RR^\gamma)$, so the sets
$X_\omega$ are $\RR^\gamma$-invariant and $\Lambda^\infty = \bigsqcup_\omega X_\omega$.
For $\omega \in \lms$, let $\RR^\gamma_\omega := \RR^\gamma \cap (X_\omega \times
X_\omega)$, regarded as an equivalence relation on $(X_\omega, \mueq|_{X_\omega})$. Then
$\RR^\gamma_\omega$ is clopen in $\RR^\gamma$, and the canonical inclusion
$C_c(\RR^\gamma_\omega) \subseteq C_c(\RR^\gamma)$ extends to an isomorphism
$W^*(\RR^\gamma_\omega) \cong \unb_{X_\omega} W^*(\RR^\gamma)$. The formula $\varphi(f) =
\frac{1}{\mueq(X_\omega)} \int_{\Lambda^\infty} f(x,x)d\mueq(x)$ for $f$ supported on
$\RR^\gamma_\omega$ gives a normal tracial state on $W^*(\RR^\gamma_\omega)$. Uniqueness
of the trace on the dense subalgebra $C^*(\RR^\gamma_\omega) \cong q_\omega
C^*(\Lambda)^\gamma$ implies that $\varphi$ is a unique normal tracial state. Thus
$W^*(\RR^\gamma_\omega)$ is a factor, and so $\RR^\gamma_\omega$ is ergodic by
\cite[Proposition~2.9(2)]{FM2}. \ep

We can now compute the factor types of the extremal KMS states of $C^*(\Lambda)$.

\begin{theorem} \label{tmain}
Let $\Lambda$ be a strongly connected finite $k$-graph,  let $\alpha$ denote the
preferred dynamics on~$C^*(\Lambda)$, and let $\varphi_\chi$ be the extremal
$\alpha$-KMS$_1$-state on $C^*(\Lambda)$ corresponding to a character $\chi$ of
$\Per\Lambda$ as in \cite[Theorem 7.1]{AHLRS2}. Let $\Pp_\Lambda \subseteq\Z^k$ be the
group of periods of $\Lambda$ from Lemma~\ref{lem:Pp}. Then the Connes invariant $S$ of
$\pi_{\varphi_\chi}(C^*(\Lambda))''$ is
\[
S = \overline{\{\rho(\Lambda)^g \mid g \in \Pp_\Lambda\}} \subseteq [0,\infty).
\]
\end{theorem}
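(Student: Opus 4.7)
The plan is to identify the von Neumann algebra with an equivalence-relation factor to which Corollary~\ref{ceqConnes} applies after a suitable reduction. By \proref{pmainp}, $\pi_{\varphi_\chi}(C^*(\Lambda))''\cong W^*(\RR)$, so it suffices to compute $S(W^*(\RR))$. Fix any $\omega\in\lms$; since $\mueq$ is fully supported, $\unb_{X_\omega}$ is a nonzero projection in the factor $W^*(\RR)$, hence full, and therefore the Connes invariant of $W^*(\RR)$ coincides with that of the corner $\unb_{X_\omega}W^*(\RR)\unb_{X_\omega}=W^*(\RR|_{X_\omega})$. I thus reduce the problem to computing $S(W^*(\RR|_{X_\omega}))$.

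To apply \corref{ceqConnes} to $\RR|_{X_\omega}$, I need the kernel relation $(\RR|_{X_\omega})^D=\RR^D\cap(X_\omega\times X_\omega)$ to be ergodic on $(X_\omega,\mueq|_{X_\omega})$. Since $\RR^\gamma\subseteq\RR^D$, this kernel contains $\RR^\gamma|_{X_\omega}$, which is ergodic by \corref{cergodic}; hence so is the kernel. Corollary~\ref{ceqConnes} then identifies $S(W^*(\RR|_{X_\omega}))$ with the essential range of $D$ restricted to $\RR|_{X_\omega}$. For the upper bound on that essential range: given $(x,y)\in\RR|_{X_\omega}$, the fact that $r(x),r(y)\in\omega$ forces $C(r(x),r(y))=0$ in $\Z^k/\Pp_\Lambda$ by \lemref{twopaths}, and then \corref{cor:C<->gpd} forces any $g$ with $(x,g,y)\in\G$ to lie in $\Pp_\Lambda$, so $D(x,y)=\rho(\Lambda)^g\in\{\rho(\Lambda)^g\mid g\in\Pp_\Lambda\}$ by~\eqref{RadNik}. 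For the lower bound, given $g\in\Pp_\Lambda$, I would invoke \lemref{lem:Pp} to write $g=d(\alpha)-d(\beta)$ for cycles $\alpha,\beta\in v\Lambda v$ at any chosen $v\in\omega$, and consider the bisection $B_g:=\{(\alpha z,\beta z)\mid z\in v\Lambda^\infty\}\subseteq\RR|_{X_\omega}$. Its range projection is the cylinder $Z(\alpha)$, of positive $\mueq$-measure, and~\eqref{RadNik} gives $D\equiv\rho(\Lambda)^{-g}$ on $B_g$; since $\Pp_\Lambda$ is symmetric and the essential range is closed, this yields the claimed equality.

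The conceptual crux is the reduction to the corner $W^*(\RR|_{X_\omega})$. On the full relation $\RR$, the cocycle $D$ takes values $\rho(\Lambda)^g$ for $g$ ranging over all of $\Z^k$, and the kernel $\RR^D$ generally fails to be ergodic (its ergodic components being controlled by the image of $\Pp_\Lambda+\{g\mid\rho(\Lambda)^g=1\}$ in $\Z^k/\Pp_\Lambda$), so Corollary~\ref{ceqConnes} cannot be applied directly; restricting to $X_\omega$ simultaneously cuts the exponents of $D$ down to $\Pp_\Lambda$ via $C$ and ensures ergodicity of the restricted kernel via $\RR^\gamma|_{X_\omega}$. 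Both effects are fundamentally consequences of the Frobenius analysis of \secref{sec:PF}; once they are in place, the two-sided bound on the essential range is routine.
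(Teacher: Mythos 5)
Your argument is correct and follows essentially the same route as the paper's proof: reduce to $W^*(\RR)$ via Proposition~\ref{pmainp}, pass to the corner over $X_\omega$, use Corollary~\ref{cergodic} to get ergodicity of $\RR^\gamma_\omega\subseteq\RR^D_\omega$ so that Corollary~\ref{ceqConnes} applies, and bound the essential range of $D$ above via Corollary~\ref{cor:C<->gpd}. The only (immaterial) difference is in realising each $\rho(\Lambda)^g$ as an essential value: you use the positive-measure bisection $\{(\alpha z,\beta z)\}$ built from two cycles at a fixed vertex, whereas the paper uses the set $\{(x,y)\in X_\omega\times X_\omega\mid\sigma^m(x)=\sigma^n(y)\}$, which projects onto all of $X_\omega$; both work.
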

\bp%
By Proposition~\ref{pmainp} it suffices to show that $S(W^*(\RR)) =
\overline{\{\rho(\Lambda)^g \mid g \in \Pp_\Lambda\}}$. Let $\sim$ be as in
Lemma~\ref{twopaths}, and fix $\omega\in\lms$. Let $X_\omega$ be as in~\eqref{exomega}.
Since $W^*(\RR)$ is a factor, $S(W^*(\RR)) = S(\unb_{X_\omega}W^*(\RR)\unb_{X_\omega})$.
The corner $\unb_{X_\omega}W^*(\RR)\unb_{X_\omega}$ is also a factor, and is the von
Neumann algebra of the relativised equivalence relation $\RR_\omega :=
\RR\cap(X_\omega\times X_\omega)$ on $(X_\omega,\mueq|_{X_\omega})$.

Consider the relation $\RR^D_\omega \subseteq \RR_\omega$ as in~\eqref{eq:RD}, so
$$
x\sim_{\RR^D_\omega} y\text{ if and only if } x\sim_{\RR_{\omega}}y\text{ and } D(x,y)=1.
$$
Corollary~\ref{cergodic} implies that $\RR^\gamma_\omega = \RR^\gamma \cap
(X_\omega\times X_\omega) \subseteq \RR^D_\omega$ is ergodic, and so $\RR^D_\omega$ is
ergodic too. So Corollary~\ref{ceqConnes} applied to $\QQ = \RR_\omega$ implies that
$S(W^*(\RR)) = S(W^*(\RR_\omega))$ is the set of essential values of $D|_{\RR_\omega}$
with respect to the left counting measure $\nueq$ induced by $\mueq$. We must show that
this set is precisely $\overline{\{\rho(\Lambda)^g \mid g \in \Pp_\Lambda\}}$.

In order to prove that $\overline{\{\rho(\Lambda)^g \mid g \in \Pp_\Lambda\}}$ contains
the set of essential values, it suffices to show that $D(x,y)\in{\{\rho(\Lambda)^g \mid g
\in \Pp_\Lambda\}}$ for $\nueq$-a.e.~$(x,y)\in \RR_\omega$. Take $(x,y)\in \RR_\omega$
and choose $m,n \in \N^k$ with $(x, m-n, y) \in \G$. Since $r(x) \sim r(y)$,
Corollary~\ref{cor:C<->gpd} gives $m-n \in \Pp_\Lambda$. Hence, by~\eqref{RadNik}, we
$\nueq$-almost surely have
$$
D(x, y) = \rho(\Lambda)^{n-m} \in \{\rho(\Lambda)^g \mid g \in \Pp_\Lambda\}.
$$

Now fix $s \in \{\rho(\Lambda)^g \mid g \in \Pp_\Lambda\}$; say $m,n\in\N^k$ satisfy
$m-n\in \Pp_\Lambda$ and $s = \rho(\Lambda)^{n-m}$. For every $x\in X_\omega$ we can find
$y\in\Lambda^\infty$ such that $\sigma^m(x)=\sigma^n(y)$, so $(x,m-n,y)\in\G$. Then  by
Corollary~\ref{cor:C<->gpd} we have $r(x)\sim r(y)$, hence $y\in X_\omega$. In
particular, the projection of the closed set
\[
Z:=\{(x,y)\in X_\omega\times X_\omega\mid \sigma^m(x)=\sigma^n(y)\}
\]
onto the first coordinate is the entire set $X_\omega$. It follows that $\nueq(Z)>0$, and
since $D(x, y) = \rho(\Lambda)^{n-m}=s$ for $\nueq$-a.e.~$(x,y)\in Z$, we see that $s$ is
an essential value of $D$ on $\RR_\omega$. Since the set of essential values of $D$ is
closed, the result follows.
\ep

\begin{remark}
We computed the type of $\varphi_\chi$ without describing the centre of the centraliser
of $\varphi_\chi$ in $\pi_{\varphi_\chi}(C^*(\Lambda))''$. But such a description falls
easily out of our arguments: We want to understand the ergodic components of the
equivalence relation $\RR^D$ on $(\Lambda^\infty,\mueq)$ defined as in~\eqref{eq:RD}.
Since the ergodic components of $\RR^\gamma \subseteq \RR^D$ are the sets~$X_\omega$, the
ergodic components of $\RR^D$ are unions of these sets. In other words, the ergodic
components are defined by a coarser equivalence relation $\approx$ than the relation
$\sim$ on $\Lambda^0$. A moment's reflection shows that this equivalence relation must be
given by $v\approx w$ if and only if there are $\lambda,\mu \in \Lambda$ with
$$
v=r(\lambda),\quad w=r(\mu),\quad s(\lambda)=s(\mu)\quad\text{and}\quad
    \rho(\Lambda)^{d(\lambda)-d(\mu)}=1.
$$
So the minimal central projections of the centraliser are the images of the projections
$\sum_{v\in\omega}s_v$ for $\omega\in\lmss$ (cf.~\cite{O}).
\end{remark}

\begin{proof}[Proof of Theorem~\ref{thm:main}]
Part~(\ref{it:b>1}) follows immediately from Proposition~\ref{prp:toeplitzfactors}, so
suppose that $\beta = 1$. Then \cite[Corollary~4.6]{AHLRS2} shows that $\phi$ factors
through a state $\overline{\phi}$ of $C^*(\Lambda)$, and \cite[Theorem~7.1]{AHLRS2}
implies that $\overline{\phi} = \varphi_\chi$ for some character $\chi$ of
$\Per(\Lambda)$.

If $\rho(\Lambda) = (1, \dots, 1)$, then Proposition~\ref{prp:P=Per} shows that $\Lambda
\cong \Delta_k/\Pp_\Lambda$. So each $v\Lambda^n$ and hence each $v\Lambda^\infty$ is a
singleton set. Choose $u_0 \in \Lambda^0$ and fix paths $\lambda_v \in v\Lambda u_0$
indexed by $v \in \Lambda^0$. Let $x$ be the unique infinite path with range $u_0$. Then
$(\lambda_v x, d(\lambda_v) - d(\lambda_w), \lambda_w x) \in \G$ for all $v,w$, and it
follows that the equivalence relation $\RR$ is the full equivalence relation $\Lambda^0
\times \Lambda^0$. Hence $W^*(\RR) \cong \Mat_{\Lambda^0}(\C)$. This proves
statement~(\ref{it:b=1;r=1}).

Now suppose that $\rho(\Lambda) \not= (1, \dots, 1)$. Then Theorem~\ref{tmain} shows that
the Connes invariant of $\pi_{\overline{\phi}}(C^*(\Lambda))''$ is the closure of the set
$S$ described in statement~(\ref{it:b=1;r>1}). Since $\Pp_\Lambda$ is a finite index
subgroup of $\Z^k$, this set has nontrivial intersection with $(0,1)$, and the result
follows.
\end{proof}

\begin{remark}\label{rmk:Yang}
Theorem~\ref{thm:main} generalises Yang's result \cite[Theorem~5.3]{Y} to finite
$k$-graphs with more than one vertex, and removes the technical hypothesis that the
intrinsic group of the $k$-graph has rank at most one. To see this, observe that if
$\Lambda$ has just one vertex, then $\Pp_\Lambda = \Z^k$, and $\rho(\Lambda) =
(|\Lambda^{e_1}|, \dots, |\Lambda^{e_k}|)$. So Theorem~\ref{thm:main} shows that the
Connes invariant of the associated factor is the closure of the subgroup of $(0,
\infty)^\times$ generated by the numbers $|\Lambda^{e_1}|, \dots, |\Lambda^{e_k}|$. Yang
considers only the situation where each $|\Lambda^{e_i}| \ge 2$. In this case, just as
Yang says, if some $\log(|\Lambda^{e_i}|)/\log(|\Lambda^{e_j}|)$ is irrational, the
factor is of type~$\mathrm{III}_1$. Otherwise we can uniquely write each
$|\Lambda^{e_1}|^{a_i} = |\Lambda^{e_i}|^{b_i}$ with $\gcd(a_i, b_i) = 1$, and then the
factor is of type~$\mathrm{III}_\lambda$, where $\lambda =
|\Lambda^{e_1}|^{-1/\operatorname{lcm}(b_2,\cdots, b_k)}$.
\end{remark}

\begin{remark}
As discussed in Remark~\ref{rmk:Pp dependence}, the group $\Pp_\Lambda$ depends only on
the skeleton of $\Lambda$ and is independent of the factorisation property. The same is
true of the spectral-radius vector $\rho(\Lambda)$. So Theorem~\ref{thm:main} shows that
the type of the factors obtained from extremal KMS states depends only on the skeleton of
$\Lambda$ and not on the factorisation property.
\end{remark}

We finish with an explicit example of the phenomenon described in the preceding remark: a
pair of $2$-graphs $\Lambda_1, \Lambda_2$ with the same adjacency matrices, and hence
determining the same factors, in which $\Per\Lambda_1$ and $\Per\Lambda_2$ are distinct
proper subgroups of the common group of periods, which is itself a strict subgroup of
$\Z^2$.

\begin{example}
Consider the $2$-coloured graph below.
\[\begin{tikzpicture}[>=stealth, decoration={markings, mark=at position 0.5 with {\arrow{>}}}]
    \node[circle, inner sep=0pt] (u) at (0,0) {$u$};
    \node[circle, inner sep=0pt] (v) at (4,0) {$v$};
    \node[circle, inner sep=0pt] (w) at (8,0) {$w$};
    \draw[blue, postaction=decorate, out=170, in=10] (v) to node[pos=0.5, above, inner sep=1pt] {\small$a_0$} (u);
    \draw[blue, postaction=decorate, out=10, in=170] (v) to node[pos=0.5, above, inner sep=1pt] {\small$a_1$} (w);
    \draw[blue, postaction=decorate, out=350, in=190] (u) to node[pos=0.5, above, inner sep=1pt] {\small$c_0$} (v);
    \draw[blue, postaction=decorate, out=190, in=350] (w) to node[pos=0.5, above, inner sep=1pt] {\small$c_1$} (v);
    \draw[red, dashed, postaction=decorate, out=140, in=40] (v) to node[pos=0.5, above, inner sep=1pt] {\small$d_0$} (u);
    \draw[red, dashed, postaction=decorate, out=40, in=140] (v) to node[pos=0.5, above, inner sep=1pt] {\small$d_1$} (w);
    \draw[red, dashed, postaction=decorate, out=320, in=220] (u) to node[pos=0.5, above, inner sep=1pt] {\small$b_0$} (v);
    \draw[red, dashed, postaction=decorate, out=220, in=320] (w) to node[pos=0.5, above, inner sep=1pt] {\small$b_1$} (v);
\end{tikzpicture}\]
Any $2$-graph with this skeleton must satisfy $a_i b_i = d_i c_i$ for $i = 0,1$ and $a_i
b_{1-i} = d_i c_{1-i}$, and so there are two possible $2$-graphs with this skeleton: the
$2$-graph $\Lambda_1$ in which $c_i d_i = b_i a_i$ for $i=0,1$; and the $2$-graph
$\Lambda_2$ in which $c_i d_i = b_{1-i} a_{1-i}$ for $i=0,1$ (see \cite{HRSW}). Every
cycle $\mu$ in either $\Lambda_1$ or $\Lambda_2$ satisfies $d(\mu)_1 + d(\mu)_2 \in 2\Z$.
Since $d(a_1c_1) = (2,0)$ and $d(a_1b_1) = (1,1)$ generate $\{m \mid m_1 + m_2 \in
2\Z\}$, we see that $\Pp_{\Lambda_i} = \{m \mid m_1 + m_2 \in 2\Z\}$ for $i = 1,2$.

It is easy to see that $\Lambda_1$ is the pullback of the $1$-graph $E$ consisting of
blue (solid) paths in $\Lambda$ over $f : \N^2 \to \N $, $(m,n) \mapsto m+n$, as in
\cite[Definition~1.9]{KP}. Since every cycle in $E$ has an entrance, the periodicity
group of $E$ is trivial, and one can use this to check that $\Per\Lambda_1 = \Z(-1,1)
\subsetneq \Pp_{\Lambda_1} \subsetneq \Z^2$.

We claim that $\Per\Lambda_2 = 2\Z(-1,1)$. To see this, first note that every infinite
path $x \in u\Lambda^\infty_2$ satisfies $\sigma^{(1,0)}(c_1d_1c_1x) = d_1c_1 x = a_1 b_1
x \in Z(a_1)$ whereas $\sigma^{(0,1)}(c_1d_1c_1 x) = \sigma^{(0,1)}(b_2a_2c_1 x) = a_2c_1
x \in Z(a_2)$. Since $Z(a_1)$ and $Z(a_2)$ are disjoint, we deduce that $(1,0) - (0,1)
\not\in \Per\Lambda_2$. Since $\Per\Lambda_2 \subseteq \Pp_{\Lambda_2}$, we have $(m,0)
\not\in \Per\Lambda_2$ for $m$ odd, and if $m$ is even and nonzero then any path of the
form $(c_0a_0)^{m/2} c_1 x$ satisfies $(c_0a_0)^{m/2} c_1 x \in Z(c_0)$ and
$\sigma^{(m,0)}((c_0a_0)^{m/2} c_1 x) \in Z(c_1)$ giving $(m,0) \not\in \Per\Lambda_2$.
The same argument gives $(0,m) \not\in \Per\Lambda_2$. So $\Per\Lambda_2 = \Z n$ for some
$n \in (\Z\setminus\{0\})^2$. We have $v\Lambda^\infty_2 = \{b_{i_0}a_{i_0}b_{i_1}a_{i_1}
\dots \mid (i_n)^\infty_{n=1} \in \{0,1\}^\N\}$, and repeated application of the
factorisation rules using this description shows that $c_ia_i x = b_{1-i}d_{1-i} x$ for
$i = 1,2$ and $x \in v\Lambda^\infty_2$. Since $Z(v) = Z(c_1 a_1) \cup Z(c_2a_2)$, it
follows that $\sigma^{(2,0)}(x) = \sigma^{(0,2)}(x)$ for all $x \in v\Lambda^\infty_2$,
and so $(2,-2) \in \Per\Lambda_2$ by \cite[Lemma~5.1]{AHLRS2}, and therefore
$\Per\Lambda_2 = 2\Z(1,-1)$ as claimed.

For either of $\Lambda_1$ or $\Lambda_2$, we have
\[
A_1 = A_2 = \left(\begin{matrix} 0 & 1 & 0 \\ 1 & 0 & 1 \\ 0 & 1 & 0\end{matrix}\right),
\]
So $A_1(1, \sqrt2, 1)^t = \sqrt2(1, \sqrt2, 1)^t$, and since $A_1$ is irreducible, the
Perron-Frobenius theorem gives $\rho(\Lambda_i) = (\sqrt2, \sqrt2)$ for $i=1,2$. By
inspection, for $n \in \N^2$, there is a cycle in $\Lambda_i$ of degree $n$ if and only
if $n_1 + n_2 \in 2\Z$, so that $\rho(\Lambda_i)^n = 2^{(n_1+n_2)/2}$ is a power of $2$.
So Theorem~\ref{thm:main}(\ref{it:b=1;r>1}) says that for $i=1,2$ and any extremal
$\alpha$-KMS$_1$ state $\phi$ of $C^*(\Lambda_i)$, the factor
$\pi_\phi(C^*(\Lambda_i))''$ is of type~III$_{1/2}$.
\end{example}

\begin{remark}
We could modify the $2$-coloured graph of the preceding example by replacing every red
(dashed) edge $f$ with a pair of parallel red edges $(f,0), (f,1)$. For either of $i
=0,1$, we could then specify factorisation rules on this amplified $2$-coloured graph by
$e(f,j) = (f',j)e'$ ($j = 0,1$) whenever $ef = f'e'$ in $\Lambda_i$ above, obtaining a
new $2$-graph $\Lambda_{i,2}$. We then have $\rho(\Lambda_{i,2}) = (\sqrt2, 2\sqrt2)$.
Again the cycles in $\Lambda_{i,2}$ all satisfy $d(\lambda)_1 + d(\lambda_2) \in 2\Z$, so
each $\rho(\Lambda_{i,2})^{d(\lambda)} = 2^{(d(\lambda_1)+d(\lambda_2))/2 +
d(\lambda)_2}$ is a power of $2$. So we deduce from
Theorem~\ref{thm:main}(\ref{it:b=1;r>1}) that each $\pi_\phi(C^*(\Lambda_{i,2}))''$ is
still of type~III$_{1/2}$.

If, instead of replacing each red edge with two edges $(f,0)$ and $(f,1)$ we insert three
parallel red edges $(f,0), (f,1), (f,2)$ to obtain $2$-graphs $\Lambda_{i,3}$, we obtain
$\rho(\Lambda_{i,3}) = (\sqrt2, 3\sqrt2)$. Since $\log2$ and $\log3$ are rationally
independent, so are $\log\sqrt2 = \frac12\log2$ and $\log(3\sqrt2) = \log3 +
\frac12\log2$. So Theorem~\ref{thm:main}(\ref{it:b=1;r>1}) says that
$\pi_\phi(C^*(\Lambda_{i,3}))''$ is the injective~III$_1$ factor for $i = 1,2$.
\end{remark}

\bigskip

\end{document}